\newtheorem{thm}{Theorem}
\newtheorem{lem}[thm]{Lemma}
\newtheorem{prop}[thm]{Proposition}
\newtheorem{corollary}[thm]{Corollary}
\theoremstyle{definition}
\newtheorem{defn}[thm]{Definition}
\newtheorem{example}[thm]{Example}
\newtheorem{rmk}[thm]{Remark}
\numberwithin{equation}{section}
\numberwithin{thm}{section}
\newcommand{\N}{\mathbb{N}}
\newcommand{\R}{\mathbb{R}}
\renewcommand{\S}{\mathcal{S}}
\newcommand{\Z}{\mathbb{Z}}
\renewcommand{\r}{\mathrm{r}}
\newcommand{\p}{\partial}
\newcommand{\dx}{\mathrm{d}}
\newcommand{\nm}{\noalign{\smallskip}}
\newcommand{\ds}{\displaystyle}
\newcommand{\iu}{\mathrm{i}\mkern1mu}
\newcommand{\sddots}{\raisebox{3pt}{\scalebox{.6}{$\ddots$}}}
\newcommand{\neutralize}[1]{\expandafter\let\csname c@#1\endcsname\count@}
\title{Non-reciprocal wave propagation in space-time modulated media\thanks{\footnotesize
This work was supported in part by the Swiss National Science Foundation grant number
200021--200307.}}
\author{
	Habib Ammari\thanks{\footnotesize Department of Mathematics, 
		ETH Z\"urich, 
		R\"amistrasse 101, CH-8092 Zurich, Switzerland (habib.ammari@math.ethz.ch, jinghao.cao@sam.math.ethz.ch).} \and Jinghao Cao\footnotemark[2] \and Erik Orvehed Hiltunen\thanks{\footnotesize Department of Mathematics, Yale University, 51 Prospect Street, New Haven CT 06511, USA  (erik.hiltunen@yale.edu).}}
\date{}
\begin{document}
	\maketitle
	
	\begin{abstract}
		We prove the possibility of achieving non-reciprocal wave propagation in space-time modulated media and give an asymptotic analysis of the non-reciprocity property in terms of the amplitude of the time-modulation. Such modulation causes a folding of the band structure of the material, which may induce degenerate points. By breaking time-reversal symmetry, we show that these degeneracies may open into non-symmetric, unidirectional band gaps. Finally, we illustrate our results by several numerical simulations.
	\end{abstract}

\noindent{\textbf{Mathematics Subject Classification (MSC2000):} 35J05, 35C20, 35P20, 74J20
		
\vspace{0.2cm}
		
\noindent{\textbf{Keywords:}} non-reciprocal wave propagation, unidirectional wave, subwavelength quasifrequency, time-modulation, space-time modulated medium, metamaterial, band gap structure
	\vspace{0.5cm}


\section{Introduction}
The control and manipulation of wave-matter interactions at subwavelength scales
has received considerable attention over the
past decade \cite{lemoult2016soda,yves2017crytalline,
phononic1,phononic2}. Moreover, the potential for using artificially
structured metamaterials has shown considerable 
promise \cite{ammari2021functional,review,review2}. 
Here, \emph{subwavelength} means that the length-scale of the system is considerably smaller than the operating wavelength. Subwavelength metamaterials can be achieved by having a locally resonant microstructure. In other words, the material is composed of building-blocks which themselves are subwavelength resonators \cite{ammari2017subwavelength,yves2017crystalline,yves2017topological,wang2019subwavelength}.

As reviewed in \cite{ammari2021functional}, \emph{high-contrast} resonators are a natural choice of resonators when designing subwavelength metamaterials. Here, the subwavelength nature stems from a high material contrast between the constituting materials of the structure. Such structures can be used to achieve a variety of effects \cite{davies2019fully, ammari2018minnaert,ammari2020exceptional,ammari2020highfrequency,ammari2020highorder,ammari2017subwavelength,ammari2017double,ammari2020honeycomb,MaCMiPaP}. 
Of particular importance for us are systems of time dependent high-contrast resonators. In \cite{ammari2020time}, such systems are studied and a mathematical foundation that explains some effects found in time-modulated systems for waves in the subwavelength frequency regime is provided.

In the past, significant progress has been achieved
in the field of classical waves by making use of analogies with
electronic systems \cite{analog1,analog2}.  For instance, the idea of a band gap material,
a system with a spatially varying and periodic material parameters, was motivated by the well-known physics of electronic Bloch states; the  scattering of waves in periodic media presents the same formal solutions as those for
the scattering of electrons in periodic potentials. 
 More recently, the field of topological insulators in condensed matter physics has been teeming with intriguing and very exciting discoveries. Notably, the capacity of guiding currents towards specific directions according to the spin of the travelling electrons has a great potential for electronic devices \cite{hallbook,alexis}. 

Several attempts to transpose this phenomenon to classical waves at subwavelength regimes, unveiling the pseudo-spin locking of guided waves have been made; see, for instance,  \cite{fleury2016floquet,rechtsman2013photonic,raghu2008analogs,
nash2015topological,nassar2018quantization,wilson2018temporal,wilson2019temporally}. In order to replicate spin effects from quantum systems, time-reversal symmetry should be broken. 
However, classical (nondissipative) systems are invariant under
time reversal because their dynamics are governed by the wave equation, which, unlike the Schrödinger equation, is second
order in time.

Reciprocity is an expression of time-reversal symmetry. It is a fundamental principle in wave physics, requiring that the response of a transmission channel is symmetric when source and observation points are interchanged. It is of major significance because it poses fundamental constraints on the way we
process acoustic, elastic and electromagnetic signals \cite{alu1}.  Recent trends for subwavelength devices and technological advances in the realization of efficient time-modulated systems have recently brought
time-modulated non-reciprocal devices to the spotlight. Over the
past decade, non-reciprocity based on time modulation has gained
significant attention  for different physical systems, such as in acoustics, 
mechanics, and optics \cite{alu1}. 

In this paper, we discuss the most fundamental mechanisms 
of non-reciprocity in metamaterials based on time modulation. By using lattices of spatiotemporally modulated subwavelength resonators where time-reversal symmetry is broken, we prove
the unidirectional excitation of waves guided at subwavelength scales. In the presence of only spatial modulation, the time-reversal symmetry is not broken and consequently the band functions are  symmetric for opposite directions. Breaking time-reversal symmetry, we show that
time-modulation may open degenerate points of the folded band structure into non-symmetric band gaps; opposite propagation directions are subject to distinct band gaps. If the excitation frequency falls inside the band gap for only one propagation direction, wave transmission is then prohibited in
this direction but not in the opposite one, leading to non-reciprocal transmission properties. 


Our results in this paper use the fundamental fact that phase-shifted (``rotation like'') time-modulations of subwavelength resonators can provide a kind of ``artificial spin''. They show that unidirectional guiding phenomenon is not particular to quantum systems, as conjectured in the seminal papers \cite{haldane2,haldane}. Such artificial spin cannot be achieved in systems of one or two resonators. In fact, we show that non-reciprocity requires at least three resonators inside one unit cell of the material.

%
%

This paper is organized as follows. In Section 2, we define the problem of reciprocity and discuss the Floquet-Bloch theory which is essential to solve ordinary differential equations with periodic coefficients. In Section 3, we discuss conditions in the 
time-modulation which preserve the property of reciprocity. Section 4 is devoted to the asymptotic analysis of the non-reciprocity in terms of the amplitude of the modulation. In Section 5, we numerically simulate non-reciprocity properties in a variety of structures. The paper ends with some concluding remarks in Section \ref{conclusion}.

\section{Problem formulation and preliminary theory}	
In this section, we define the problem of reciprocity. Moreover, we introduce the Floquet-Bloch theory to apply to the problem. This subsection follows closely the introductory theory provided in \cite{ammari2020time}.

\subsection{Resonator structure and wave equation}
\label{sec:formulation}
We consider the wave equation in structures with time-modulated materials. Such wave equation can be used to model acoustic and polarized electromagnetic waves.
The time dependent material parameters are given by $\rho(x,t)$ and $\kappa(x,t)$. 
In acoustics, $\rho$ and $\kappa$ represent the density and the bulk modulus of the materials. We study the time-dependent wave equation in dimension $d=2,3$:
\begin{equation}
\label{waveequation}
\left(\frac{\partial}{\partial t }\frac{1}{\kappa(x,t)}\frac{\partial}{\partial t}-\nabla\cdot\frac{1}{\rho(x,t)}\nabla\right)u(x,t)=0,\ \ x\in \mathbb{R}^d,t\in\mathbb{R}.	
\end{equation}
Furthermore, we assume a fully periodic resonator structure with a lattice $\Lambda \subset \R^d$ and unit cell $Y\subset \R^d$. Each unit cell contains a system of $N$ resonators $D\Subset Y$. $D$ is constituted by $N$ disjoint domains $D_i$ for $i=1,\ldots,N$, each $D_i$ being connected and having boundary of Hölder class $\p D_i \in C^{1,s}, 0 < s < 1$.  Denote $\mathcal{C}_i$ and $\mathcal{C}$ the periodically repeated $i^{\text{th}}$ resonators and the full crystal:
\begin{equation*}
	\mathcal{C}_i=\bigcup_{m\in \Lambda}D_i+m,\ \ \mathcal{C}=\bigcup_{m\in\Lambda}D+m.
\end{equation*} 	
We let $\Lambda^*$ to be the dual lattice and define the (space-) Brillouin zone $Y^*$ as the torus $Y^*:= \mathbb{R}^d/\Lambda^*$. 

\begin{figure}[H]
		\begin{subfigure}[b]{0.45\linewidth}
			\centering
			
			\begin{tikzpicture}[scale=1.5]
				\begin{scope}[scale=1]
					\pgfmathsetmacro{\rb}{0.13pt}
					\pgfmathsetmacro{\rs}{0.1pt}
					\pgfmathsetmacro{\ao}{326}
					\pgfmathsetmacro{\at}{46}
					\pgfmathsetmacro{\ad}{228}
					\pgfmathsetmacro{\af}{100}
					\coordinate (a) at (1,{1/sqrt(3)});		
					\coordinate (b) at (1,{-1/sqrt(3)});	
					\coordinate (c) at (2,0);
					\draw[->] (0,0) -- (a) node[pos=0.9,xshift=0,yshift=7]{ $l_1$} node[pos=0.5,above]{$Y$};
					\draw[->] (0,0) -- (b) node[pos=0.9,xshift=0,yshift=-5]{ $l_2$};
					\draw[opacity=0.5] (a) -- (c) -- (b);
					\begin{scope}[xshift = 1.1cm, yshift=0.28cm,rotate=\ao]
						\draw[fill=lightgray] plot [smooth cycle] coordinates {(0:\rb) (60:\rs) (120:\rb) (180:\rs) (240:\rb) (300:\rs) };
					\end{scope}
					\begin{scope}[xshift = 0.5cm, rotate=\at]
						\draw[fill=lightgray] plot [smooth cycle] coordinates {(0:\rb) (60:\rs) (120:\rb) (180:\rs) (240:\rb) (300:\rs) };
					\end{scope}
					\begin{scope}[xshift = 1.4cm, yshift=-0.14cm, rotate=\ad]
						\draw[fill=lightgray] plot [smooth cycle] coordinates {(0:\rb) (60:\rs) (120:\rb) (180:\rs) (240:\rb) (300:\rs) };
					\end{scope}
					\begin{scope}[xshift = 0.9cm,yshift=-0.16cm, rotate=\af]
						\draw[fill=lightgray] plot [smooth cycle] coordinates {(0:\rb) (60:\rs) (120:\rb) (180:\rs) (240:\rb) (300:\rs) };
					\end{scope}
				\end{scope}
			\end{tikzpicture}
			\vspace{0.65cm}
			\caption{Unit cell $Y$ containing $N=4$ resonators.}
		\end{subfigure}
		\begin{subfigure}[b]{0.5\linewidth}
			
			\begin{tikzpicture}[scale=1]
			
				\begin{scope}[xshift=-5cm,scale=1]

					\coordinate (a) at (1,{1/sqrt(3)});		
					\coordinate (b) at (1,{-1/sqrt(3)});	
					\coordinate (c) at (2,0);
					\pgfmathsetmacro{\rb}{0.13pt}
					\pgfmathsetmacro{\rs}{0.1pt}
					\pgfmathsetmacro{\ao}{326}
					\pgfmathsetmacro{\at}{46}
					\pgfmathsetmacro{\ad}{228}
					\pgfmathsetmacro{\af}{100}	
					\draw[opacity=0.2] (0,0) -- (a);
					\draw[opacity=0.2] (0,0) -- (b);
					\draw[opacity=0.2] (a) -- (c) -- (b);
					\begin{scope}[xshift = 1.1cm, yshift=0.28cm,rotate=\ao]
						\draw[fill=lightgray] plot [smooth cycle] coordinates {(0:\rb) (60:\rs) (120:\rb) (180:\rs) (240:\rb) (300:\rs) };
				\end{scope}
					
				\begin{scope}[xshift = 0.5cm, rotate=\at]
				
						\draw[fill=lightgray] plot [smooth cycle] coordinates {(0:\rb) (60:\rs) (120:\rb) (180:\rs) (240:\rb) (300:\rs) };
					\end{scope}
					\begin{scope}[xshift = 1.4cm, yshift=-0.14cm, rotate=\ad]
						\draw[fill=lightgray] plot [smooth cycle] coordinates {(0:\rb) (60:\rs) (120:\rb) (180:\rs) (240:\rb) (300:\rs) };
					\end{scope}
					\begin{scope}[xshift = 0.9cm,yshift=-0.16cm, rotate=\af]
						\draw[fill=lightgray] plot [smooth cycle] coordinates {(0:\rb) (60:\rs) (120:\rb) (180:\rs) (240:\rb) (300:\rs) };
					\end{scope}			
					\begin{scope}[shift = (a)]
						\draw[opacity = 0.2] (0,0) -- (1,{1/sqrt(3)}) -- (2,0) -- (1,{-1/sqrt(3)}) -- cycle; 
						\begin{scope}[xshift = 1.1cm, yshift=0.28cm,rotate=\ao]
							\draw[fill=lightgray] plot [smooth cycle] coordinates {(0:\rb) (60:\rs) (120:\rb) (180:\rs) (240:\rb) (300:\rs) };
						\end{scope}
						\begin{scope}[xshift = 0.5cm, rotate=\at]
							\draw[fill=lightgray] plot [smooth cycle] coordinates {(0:\rb) (60:\rs) (120:\rb) (180:\rs) (240:\rb) (300:\rs) };
						\end{scope}
						\begin{scope}[xshift = 1.4cm, yshift=-0.14cm, rotate=\ad]
							\draw[fill=lightgray] plot [smooth cycle] coordinates {(0:\rb) (60:\rs) (120:\rb) (180:\rs) (240:\rb) (300:\rs) };
						\end{scope}
						\begin{scope}[xshift = 0.9cm,yshift=-0.16cm, rotate=\af]
							\draw[fill=lightgray] plot [smooth cycle] coordinates {(0:\rb) (60:\rs) (120:\rb) (180:\rs) (240:\rb) (300:\rs) };
						\end{scope}
					\end{scope}
					\begin{scope}[shift = (b)]
						\draw[opacity = 0.2] (0,0) -- (1,{1/sqrt(3)}) -- (2,0) -- (1,{-1/sqrt(3)}) -- cycle; 
						\begin{scope}[xshift = 1.1cm, yshift=0.28cm,rotate=\ao]
							\draw[fill=lightgray] plot [smooth cycle] coordinates {(0:\rb) (60:\rs) (120:\rb) (180:\rs) (240:\rb) (300:\rs) };
						\end{scope}
						\begin{scope}[xshift = 0.5cm, rotate=\at]
							\draw[fill=lightgray] plot [smooth cycle] coordinates {(0:\rb) (60:\rs) (120:\rb) (180:\rs) (240:\rb) (300:\rs) };
						\end{scope}
						\begin{scope}[xshift = 1.4cm, yshift=-0.14cm, rotate=\ad]
							\draw[fill=lightgray] plot [smooth cycle] coordinates {(0:\rb) (60:\rs) (120:\rb) (180:\rs) (240:\rb) (300:\rs) };
						\end{scope}
						\begin{scope}[xshift = 0.9cm,yshift=-0.16cm, rotate=\af]
							\draw[fill=lightgray] plot [smooth cycle] coordinates {(0:\rb) (60:\rs) (120:\rb) (180:\rs) (240:\rb) (300:\rs) };
						\end{scope}
					\end{scope}
					\begin{scope}[shift = ($-1*(a)$)]
						\draw[opacity = 0.2] (0,0) -- (1,{1/sqrt(3)}) -- (2,0) -- (1,{-1/sqrt(3)}) -- cycle; 
						\begin{scope}[xshift = 1.1cm, yshift=0.28cm,rotate=\ao]
							\draw[fill=lightgray] plot [smooth cycle] coordinates {(0:\rb) (60:\rs) (120:\rb) (180:\rs) (240:\rb) (300:\rs) };
						\end{scope}
						\begin{scope}[xshift = 0.5cm, rotate=\at]
							\draw[fill=lightgray] plot [smooth cycle] coordinates {(0:\rb) (60:\rs) (120:\rb) (180:\rs) (240:\rb) (300:\rs) };
						\end{scope}
						\begin{scope}[xshift = 1.4cm, yshift=-0.14cm, rotate=\ad]
							\draw[fill=lightgray] plot [smooth cycle] coordinates {(0:\rb) (60:\rs) (120:\rb) (180:\rs) (240:\rb) (300:\rs) };
						\end{scope}
						\begin{scope}[xshift = 0.9cm,yshift=-0.16cm, rotate=\af]
							\draw[fill=lightgray] plot [smooth cycle] coordinates {(0:\rb) (60:\rs) (120:\rb) (180:\rs) (240:\rb) (300:\rs) };
						\end{scope}
					\end{scope}
					\begin{scope}[shift = ($-1*(b)$)]
						\draw[opacity = 0.2] (0,0) -- (1,{1/sqrt(3)}) -- (2,0) -- (1,{-1/sqrt(3)}) -- cycle; 
						\begin{scope}[xshift = 1.1cm, yshift=0.28cm,rotate=\ao]
							\draw[fill=lightgray] plot [smooth cycle] coordinates {(0:\rb) (60:\rs) (120:\rb) (180:\rs) (240:\rb) (300:\rs) };
						\end{scope}
						\begin{scope}[xshift = 0.5cm, rotate=\at]
							\draw[fill=lightgray] plot [smooth cycle] coordinates {(0:\rb) (60:\rs) (120:\rb) (180:\rs) (240:\rb) (300:\rs) };
						\end{scope}
						\begin{scope}[xshift = 1.4cm, yshift=-0.14cm, rotate=\ad]
							\draw[fill=lightgray] plot [smooth cycle] coordinates {(0:\rb) (60:\rs) (120:\rb) (180:\rs) (240:\rb) (300:\rs) };
						\end{scope}
						\begin{scope}[xshift = 0.9cm,yshift=-0.16cm, rotate=\af]
							\draw[fill=lightgray] plot [smooth cycle] coordinates {(0:\rb) (60:\rs) (120:\rb) (180:\rs) (240:\rb) (300:\rs) };
						\end{scope}
					\end{scope}
					\begin{scope}[shift = ($(a)+(b)$)]
						\draw[opacity = 0.2] (0,0) -- (1,{1/sqrt(3)}) -- (2,0) -- (1,{-1/sqrt(3)}) -- cycle; 
						\begin{scope}[xshift = 1.1cm, yshift=0.28cm,rotate=\ao]
							\draw[fill=lightgray] plot [smooth cycle] coordinates {(0:\rb) (60:\rs) (120:\rb) (180:\rs) (240:\rb) (300:\rs) };
						\end{scope}
						\begin{scope}[xshift = 0.5cm, rotate=\at]
							\draw[fill=lightgray] plot [smooth cycle] coordinates {(0:\rb) (60:\rs) (120:\rb) (180:\rs) (240:\rb) (300:\rs) };
						\end{scope}
						\begin{scope}[xshift = 1.4cm, yshift=-0.14cm, rotate=\ad]
							\draw[fill=lightgray] plot [smooth cycle] coordinates {(0:\rb) (60:\rs) (120:\rb) (180:\rs) (240:\rb) (300:\rs) };
						\end{scope}
						\begin{scope}[xshift = 0.9cm,yshift=-0.16cm, rotate=\af]
							\draw[fill=lightgray] plot [smooth cycle] coordinates {(0:\rb) (60:\rs) (120:\rb) (180:\rs) (240:\rb) (300:\rs) };
						\end{scope}
					\end{scope}
					\begin{scope}[shift = ($-1*(a)-(b)$)]
						\draw[opacity = 0.2] (0,0) -- (1,{1/sqrt(3)}) -- (2,0) -- (1,{-1/sqrt(3)}) -- cycle; 
						\begin{scope}[xshift = 1.1cm, yshift=0.28cm,rotate=\ao]
							\draw[fill=lightgray] plot [smooth cycle] coordinates {(0:\rb) (60:\rs) (120:\rb) (180:\rs) (240:\rb) (300:\rs) };
						\end{scope}
						\begin{scope}[xshift = 0.5cm, rotate=\at]
							\draw[fill=lightgray] plot [smooth cycle] coordinates {(0:\rb) (60:\rs) (120:\rb) (180:\rs) (240:\rb) (300:\rs) };
						\end{scope}
						\begin{scope}[xshift = 1.4cm, yshift=-0.14cm, rotate=\ad]
							\draw[fill=lightgray] plot [smooth cycle] coordinates {(0:\rb) (60:\rs) (120:\rb) (180:\rs) (240:\rb) (300:\rs) };
						\end{scope}
						\begin{scope}[xshift = 0.9cm,yshift=-0.16cm, rotate=\af]
							\draw[fill=lightgray] plot [smooth cycle] coordinates {(0:\rb) (60:\rs) (120:\rb) (180:\rs) (240:\rb) (300:\rs) };
						\end{scope}
					\end{scope}
					\begin{scope}[shift = ($(a)-(b)$)]
						\draw[opacity = 0.2] (0,0) -- (1,{1/sqrt(3)}) -- (2,0) -- (1,{-1/sqrt(3)}) -- cycle; 
						\begin{scope}[xshift = 1.1cm, yshift=0.28cm,rotate=\ao]
							\draw[fill=lightgray] plot [smooth cycle] coordinates {(0:\rb) (60:\rs) (120:\rb) (180:\rs) (240:\rb) (300:\rs) };
						\end{scope}
						\begin{scope}[xshift = 0.5cm, rotate=\at]
							\draw[fill=lightgray] plot [smooth cycle] coordinates {(0:\rb) (60:\rs) (120:\rb) (180:\rs) (240:\rb) (300:\rs) };
						\end{scope}
						\begin{scope}[xshift = 1.4cm, yshift=-0.14cm, rotate=\ad]
							\draw[fill=lightgray] plot [smooth cycle] coordinates {(0:\rb) (60:\rs) (120:\rb) (180:\rs) (240:\rb) (300:\rs) };
						\end{scope}
						\begin{scope}[xshift = 0.9cm,yshift=-0.16cm, rotate=\af]
							\draw[fill=lightgray] plot [smooth cycle] coordinates {(0:\rb) (60:\rs) (120:\rb) (180:\rs) (240:\rb) (300:\rs) };
						\end{scope}
					\end{scope}
					\begin{scope}[shift = ($-1*(a)+(b)$)]
						\draw[opacity = 0.2] (0,0) -- (1,{1/sqrt(3)}) -- (2,0) -- (1,{-1/sqrt(3)}) -- cycle; 
						\begin{scope}[xshift = 1.1cm, yshift=0.28cm,rotate=\ao]
							\draw[fill=lightgray] plot [smooth cycle] coordinates {(0:\rb) (60:\rs) (120:\rb) (180:\rs) (240:\rb) (300:\rs) };
						\end{scope}
						\begin{scope}[xshift = 0.5cm, rotate=\at]
							\draw[fill=lightgray] plot [smooth cycle] coordinates {(0:\rb) (60:\rs) (120:\rb) (180:\rs) (240:\rb) (300:\rs) };
						\end{scope}
						\begin{scope}[xshift = 1.4cm, yshift=-0.14cm, rotate=\ad]
							\draw[fill=lightgray] plot [smooth cycle] coordinates {(0:\rb) (60:\rs) (120:\rb) (180:\rs) (240:\rb) (300:\rs) };
						\end{scope}
						\begin{scope}[xshift = 0.9cm,yshift=-0.16cm, rotate=\af]
							\draw[fill=lightgray] plot [smooth cycle] coordinates {(0:\rb) (60:\rs) (120:\rb) (180:\rs) (240:\rb) (300:\rs) };
						\end{scope}
					\end{scope}
					\begin{scope}[shift = ($2*(a)$)]
						\draw (1,0) node[rotate=30]{$\cdots$};
					\end{scope}
					\begin{scope}[shift = ($-2*(a)$)]
						\draw (1,0) node[rotate=210]{$\cdots$};
					\end{scope}
					\begin{scope}[shift = ($2*(b)$)]
						\draw (1,0) node[rotate=-30]{$\cdots$};
					\end{scope}
					\begin{scope}[shift = ($-2*(b)$)]
						\draw (1,0) node[rotate=150]{$\cdots$};
					\end{scope}
				\end{scope}
			\end{tikzpicture}
			
			\caption{Infinite, periodic system with unit cell $Y$ and 
			lattice $\Lambda$.}
		\end{subfigure}
		\caption{Illustrations of the unit cell and the periodic system of resonators.} 
	\end{figure}
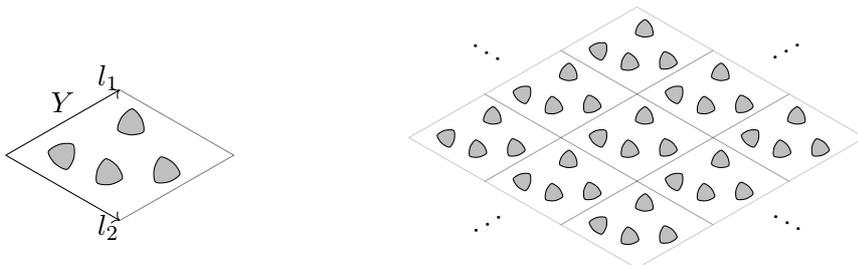
\noindent
For the purpose of this paper, we apply time-modulation to the interior of the resonators, while the surrounding material is constant in $t$. We let
\begin{equation}
	\label{modulation}
	\kappa(x,t)=
\begin{cases}
	\kappa_0, \ & x\in\mathbb{R}^d\backslash \overline{\mathcal{C}}\\
	\kappa_r\kappa_i(t),\ & x\in\mathcal{C}_i
\end{cases} 
,\ \ 
	\rho(x,t)=
\begin{cases}
	\rho_0, \ & x\in\mathbb{R}^d\backslash \overline{\mathcal{C}}\\
	\rho_r\rho_i(t),\ & x\in\mathcal{C}_i
\end{cases},
\end{equation}
for $i=1,\ldots, N$. Here, $\rho_0$, $\kappa_0$, $\rho_r$, and $\kappa_r$ are positive constants. The functions $\rho_i(t) \in \mathcal{C}^0(\mathbb{R})$ and $\kappa_i(t) \in \mathcal{C}^1(\mathbb{R})$ describe the modulation inside the $i^{\text{th}}$ resonator $\mathcal{C}_i$. We assume that each of $\rho_i,\kappa_i$ is periodic with period $T$.

We define the contrast parameter $\delta$ as 
$$
\delta := \frac{\rho_r}{\rho_0}.
$$
In (\ref{waveequation}), we have the transmission conditions at $x\in \p D_i$
$$\delta \frac{\partial {u}}{\partial \nu} \bigg|_{+} - \frac{1}{\rho_i(t)}\frac{\partial {u}}{\partial \nu} \bigg|_{-} = 0, \qquad x\in \p D_i, \ t\in \R,$$
where $\partial/\partial \nu$ is the outward normal derivative at $\p D_i$ and $|_{+,-}$ denote the limits from outside and inside $D_i$, respectively

In order to achieve subwavelength resonances we assume that $\delta \ll 1$ and consider the regime where the modulation frequency $$\Omega := \frac{2\pi}{T} = O(\delta^{1/2}).$$ 
We also assume that $\dx \kappa_i/\dx t= O(\delta^{1/2})$ for $i=1,\ldots,N.$

Note that in the static case where there is no modulation of the material parameters (i.e., when $\rho_i(t)=\kappa_i(t)=1$ for all $i$), the system of $N$ subwavelength resonators has $N$ {\em subwavelength frequencies} of order of $O(\delta^{1/2})$. We refer the reader to \cite{ammari2021functional} for the details.

\subsection{Floquet-Bloch theory}
\label{Floquettheory}
	Let $A(t)$ be a $T$-periodic $N\times N$ complex matrix function and consider the ordinary differential equation (ODE):
	\begin{equation}
	\label{ode}
		\frac{\dx x}{\dx t}(t) =A(t)x(t).
	\end{equation}
Recall that the fundamental solution matrix of (\ref{ode}) is a $N\times N$ matrix with linear independent column vectors, which solves (\ref{ode}). The following theorem is classical. 
\begin{thm}
	(Floquet's theorem) Denote $X(t)$ the matrix-valued fundamental solution with initial value $X(0)=\mathrm{Id}$, where $\mathrm{Id}$ is the identity matrix. There exists a constant matrix $F$ and a $T$-periodic matrix function $P(t)$ such that
	\begin{equation}
	\label{floquetthm}
	X(t)= P(t) e^{Ft}.
	\end{equation}
\end{thm}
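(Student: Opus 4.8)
The plan is to follow the classical strategy built around the \emph{monodromy matrix}. First I would exploit the $T$-periodicity of $A(t)$ to show that the time-shifted fundamental solution $X(t+T)$ solves the very same ODE. Setting $Y(t) := X(t+T)$ and differentiating gives $Y'(t) = A(t+T)X(t+T) = A(t)Y(t)$, where the last equality uses $A(t+T)=A(t)$; thus $Y$ is again a matrix solution. Since $X(0)=\mathrm{Id}$, Liouville's formula $\det X(t) = \exp\!\big(\int_0^t \mathrm{tr}\,A(s)\,\dx s\big)$ shows $\det X(t)\neq 0$ for all $t$, so $X(t)$ is everywhere invertible and any two fundamental solutions differ by right multiplication by a constant matrix. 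Hence there is a constant matrix $C$ with $X(t+T)=X(t)C$; evaluating at $t=0$ identifies $C = X(T)$, the monodromy matrix, which is invertible.

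The second step is to take a logarithm of $C$. Because we work over $\mathbb{C}$ and $C=X(T)$ is invertible, it admits a matrix logarithm: there exists a complex matrix $F$ with $e^{FT}=X(T)=C$. I would produce this either from the Jordan canonical form of $C$ or from the holomorphic functional calculus applied to a branch of $\log$ on a domain containing the spectrum of $C$, which excludes $0$ precisely because $C$ is invertible.

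With $F$ in hand, I would set $P(t) := X(t)e^{-Ft}$ and check its $T$-periodicity. Using $e^{-F(t+T)} = e^{-Ft}e^{-FT}$ together with $e^{-FT}=C^{-1}$ and the fact that $e^{-Ft}$ commutes with $C^{-1}$ (both being functions of $F$), one computes $P(t+T) = X(t+T)e^{-F(t+T)} = X(t)C\,e^{-Ft}C^{-1} = X(t)e^{-Ft} = P(t)$. Rearranging the definition of $P$ then yields $X(t)=P(t)e^{Ft}$, which is the claimed factorization.

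I expect the main obstacle to be the existence of the matrix logarithm in the second step. This is exactly where the complex-valued hypothesis on $A$ is essential: a real invertible matrix (for instance one carrying a negative eigenvalue) need not possess a real logarithm, so the clean factorization with real $P$ and $F$ can fail; over $\mathbb{C}$, invertibility alone suffices. The remaining arguments are routine manipulations with fundamental matrices and the commutativity of the matrix exponential with functions of the same matrix.
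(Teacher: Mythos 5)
Your argument is correct and is the standard classical proof (monodromy matrix via periodicity of $A$ and Liouville's formula, existence of a complex matrix logarithm of the invertible matrix $X(T)$, then $P(t):=X(t)e^{-Ft}$ checked to be $T$-periodic using that $e^{-Ft}$ commutes with $e^{-FT}$). The paper itself states the theorem as classical and gives no proof, so there is nothing to contrast; your write-up, including the remark that the complex setting is what guarantees the logarithm exists, is exactly the expected argument.
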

\noindent
For each eigenvalue $\lambda := e^{\mathrm{i}\omega}$ of $e^{F}$, there is a Bloch solution $x(t)$ which is $\omega$-quasiperiodic, i.e.,
\begin{equation*}
	x(t+T)=e^{\iu\omega T}x(t).
\end{equation*}
Observe that $\omega$ is defined modulo $\Omega$. Therefore, we define the time-Brillouin zone as $Y_t^*:=\mathbb{C}/(\Omega\mathbb{Z})$.

\begin{rmk}
	In some literatures, e.g. \cite{Yakubovich}, $e^{\mathrm{i}\omega T}$ is called  a characteristic multiplier. We refer to $\omega$ as a quasifrequency, while $\iu \omega$ is a Floquet exponent.
\end{rmk}

If $A$ is time-independent, the solution to \eqref{ode} can be written as $x(t)=e^{At}x(0)$. The Floquet exponents are then given by the eigenvalues of $A$. Since the Floquet exponent is defined modulo $\iu\Omega$, we need the following definition.
\begin{defn}[Folding number]
\label{foldingnumber}
	Let $\omega_A$ be the imaginary part of an eigenvalue of the time independent matrix $A$, we can uniquely write $\omega_A=\omega_0+m\Omega$, where $\omega_0\in [-\Omega/2,\Omega/2)$. The integer $m$ is called the folding number.	
\end{defn}

Applying the Floquet transform to the wave equation (\ref{waveequation}) in  $x$ and seeking quasiperiodic solutions in $t$, we obtain the differential problem
 \begin{equation} \label{eq:wave_transf}
 	\begin{cases}\ \ds \left(\frac{\p }{\p t } \frac{1}{\kappa(x,t)} \frac{\p}{\p t} - \nabla \cdot \frac{1}{\rho(x,t)} \nabla\right) u(x,t) = 0,\\[0.3em]
 		\	u(x,t)e^{-\iu \alpha\cdot x} \text{ is $\Lambda$-periodic in $x$,}\\
 		\	u(x,t)e^{-\iu \omega t} \text{ is $T$-periodic in $t$}. 
 	\end{cases}
 \end{equation} 
For a given $\alpha\in Y^*$, we seek $\omega\in Y_t^*$ such that there is a non-zero solution $u$ to (\ref{eq:wave_transf}).
\begin{defn}
	Quasifrequencies as a function of $\alpha$, i.e., $\alpha \mapsto \omega(\alpha)$ is called a band function. All band functions together constitute  the band structure, or dispersion relationship, of the material.
\end{defn}
The quasiperiodicity (or quasimomentum) $\alpha$ corresponds to the direction of wave propagation, and we therefore introduce the following definition.
\begin{defn}
 	Waves propagate reciprocally if for every $\alpha \in Y^*$, the set of quasifrequencies of  (\ref{eq:wave_transf}) at $\alpha$ coincides with the set of quasifrequencies at $-\alpha$. The reciprocal equation associated with  (\ref{eq:wave_transf}) defined with $\alpha\in Y^*$ is that with $-\alpha$.
\end{defn} 
The purpose of this paper is to investigate under which time-modulation conditions the reciprocity of waves can be broken, and to give an asymptotic analysis of the reciprocity property in terms of the amplitude of the modulation. 
\subsection{Layer-potential theory and the capacitance matrix}
We first define 
	the $\alpha$-quasiperiodic Green's function $G^{\alpha,k}(x,y)$ as the solution of the following equation:
	\begin{equation*}
	\Delta_xG^{\alpha,k}(x,y) + k^2G^{\alpha,k}(x,y) = \sum_{n \in \Lambda} \delta(x-n)e^{\iu\alpha\cdot n}.
	\end{equation*}
It can be shown that
 if $k \neq |\alpha+q|$ for all $q\in \Lambda^*$, then $G^{\alpha,k}$ is given by 
\begin{equation*}
	G^{\alpha,k}(x,y)= \frac{1}{|Y|}\sum_{q\in \Lambda^*} \frac{e^{\iu(\alpha+q)\cdot (x-y)}}{ k^2-|\alpha+q|^2},
\end{equation*}
where $|Y|$ denotes the volume of $Y$;	see, for instance, \cite{MaCMiPaP,ammari2009layer}.

Let $D\subset \R^d$ be as in Section \ref{sec:formulation}. We define the quasiperiodic single layer potential $\mathcal{S}_D^{\alpha,k}: L^2(\partial D) \rightarrow H_{\textrm{loc}}^1(\R^d)$ by
$$\mathcal{S}_D^{\alpha,k}[\phi](x) := \int_{\partial D} G^{\alpha,k} (x,y) \phi(y) \dx\sigma(y),\quad x\in \mathbb{R}^d.$$
Here, the space $H_{\textrm{loc}}^1(\R^d)$ consists of functions that are square integrable and with a square integrable weak first derivative on every compact subset of $\R^d$. Taking the trace on $\p D$, it is well-known that $\S_D^{\alpha,0} : L^2(\p D) \rightarrow H^1(\p D)$ is invertible if $\alpha \neq  0$ \cite{MaCMiPaP}. For low frequencies, i.e., as $k \to0$, we have the asymptotic expansion (see, for instance, \cite{MaCMiPaP})
\begin{equation}\label{eq:Sexp}
	\mathcal{S}_D^{\alpha,k} = \mathcal{S}_D^{\alpha,0} + O(k^2),
\end{equation}
valid uniformly for $|\alpha| > c > 0$.
\begin{defn}[Capacitance matrix]
	For $\alpha \neq 0$, the basis functions $\psi_i^\alpha$ and the capacitance coefficients $C_{ij}^\alpha$ are defined as
\begin{equation}\label{eq:psiC}
	\psi_i^\alpha = \left(\S_D^{\alpha,0}\right)^{-1}[\chi_{\p D_i}], \qquad C_{ij}^\alpha= -\int_{\p D_i} \psi_j^\alpha   \dx \sigma,
\end{equation}
for $i,j=1,\ldots,N$, where $\chi_{\p D_i}$ is the characteristic function of $\partial D_i$. The capacitance matrix $C^\alpha$ is defined as the matrix $C^\alpha = \left(C_{ij}^\alpha\right)$.
\end{defn}
As we shall see, the capacitance matrix provides, to leading order, an asymptotic approximation of the equation \eqref{eq:wave_transf} as $\delta \to 0$. The following results hold.
\begin{lem}[\cite{ammari2021functional}] \label{lem:herm}
	The capacitance matrix $C^{\alpha}$ is Hermitian.
\end{lem}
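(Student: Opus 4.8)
The plan is to reduce the Hermitian property of $C^\alpha$ to the self-adjointness of the single-layer operator $\S_D^{\alpha,0}$ on $L^2(\p D)$, equipped with the Hermitian pairing $\langle u,v\rangle=\int_{\p D}u\,\overline{v}\,\dx\sigma$. First I would record that, since each $\chi_{\p D_i}$ is constant on every connected component of $\p D$, it lies in $H^1(\p D)$, so that $\psi_i^\alpha=(\S_D^{\alpha,0})^{-1}[\chi_{\p D_i}]$ is a well-defined element of $L^2(\p D)$ and the coefficient can be rewritten as an inner product,
\begin{equation*}
C_{ij}^\alpha=-\int_{\p D_i}\psi_j^\alpha\,\dx\sigma=-\int_{\p D}\chi_{\p D_i}\,\psi_j^\alpha\,\dx\sigma=-\big\langle (\S_D^{\alpha,0})^{-1}[\chi_{\p D_j}],\,\chi_{\p D_i}\big\rangle,
\end{equation*}
where I used that $\chi_{\p D_i}$ is real-valued. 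Taking complex conjugates gives $\overline{C_{ij}^\alpha}=-\langle \chi_{\p D_i},(\S_D^{\alpha,0})^{-1}[\chi_{\p D_j}]\rangle$, so everything comes down to showing that $(\S_D^{\alpha,0})^{-1}$ is self-adjoint.

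The key step is the self-adjointness of $\S_D^{\alpha,0}$ itself, which I would extract from a Hermitian symmetry of its kernel. Using the explicit series representation of $G^{\alpha,0}$ given above, one checks directly that
\begin{equation*}
\overline{G^{\alpha,0}(x,y)}=\frac{1}{|Y|}\sum_{q\in\Lambda^*}\frac{e^{-\iu(\alpha+q)\cdot(x-y)}}{-|\alpha+q|^2}=G^{\alpha,0}(y,x),
\end{equation*}
since conjugation flips the sign of the exponent while leaving the real denominator untouched, and the right-hand side is exactly $G^{\alpha,0}$ with $x$ and $y$ interchanged. By Fubini's theorem this kernel symmetry translates into $\langle \S_D^{\alpha,0}[\phi],\psi\rangle=\langle\phi,\S_D^{\alpha,0}[\psi]\rangle$ for all $\phi,\psi\in L^2(\p D)$, i.e.\ $\S_D^{\alpha,0}$ is self-adjoint. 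Since a bounded, injective self-adjoint operator has a symmetric inverse on its range, $(\S_D^{\alpha,0})^{-1}$ inherits this property on $H^1(\p D)$, where the characteristic functions live.

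Combining the two steps finishes the argument: self-adjointness gives $\langle \chi_{\p D_i},(\S_D^{\alpha,0})^{-1}[\chi_{\p D_j}]\rangle=\langle (\S_D^{\alpha,0})^{-1}[\chi_{\p D_i}],\chi_{\p D_j}\rangle$, whence $\overline{C_{ij}^\alpha}=-\langle(\S_D^{\alpha,0})^{-1}[\chi_{\p D_i}],\chi_{\p D_j}\rangle=C_{ji}^\alpha$, which is precisely the claim that $C^\alpha$ is Hermitian. The only point requiring genuine care — the main obstacle — is the functional-analytic bookkeeping behind the self-adjointness: one must justify interchanging the order of integration for the (conditionally convergent) quasiperiodic series, and confirm that interpreting $\S_D^{\alpha,0}$ as a self-adjoint operator on $L^2(\p D)$ is compatible with the invertibility stated as a map $L^2(\p D)\to H^1(\p D)$, so that applying $(\S_D^{\alpha,0})^{-1}$ to the characteristic functions is legitimate. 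Both are standard given the mapping properties recorded above and in the cited references.
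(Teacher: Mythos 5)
Your argument is correct: the kernel symmetry $\overline{G^{\alpha,0}(x,y)}=G^{\alpha,0}(y,x)$ does make $\S_D^{\alpha,0}$ self-adjoint on $L^2(\p D)$, and writing $C_{ij}^\alpha=-\langle(\S_D^{\alpha,0})^{-1}[\chi_{\p D_j}],\chi_{\p D_i}\rangle$ then yields $\overline{C_{ij}^\alpha}=C_{ji}^\alpha$. The paper itself offers no proof — it imports the lemma from \cite{ammari2021functional} — and your route via self-adjointness of the quasiperiodic single layer potential is essentially the standard argument given there, so there is nothing to object to beyond the technical caveats you already flag.
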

\begin{lem}
	\label{cap}
	For all $\alpha \in Y^*$, we have $C^{-\alpha}=\overline{C^{\alpha}}=(C^\alpha)^\top$, where the superscript $\top$ denotes the transpose. 
\end{lem}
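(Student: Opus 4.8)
The statement to prove is Lemma~\ref{cap}: for all $\alpha \in Y^*$, we have $C^{-\alpha} = \overline{C^\alpha} = (C^\alpha)^\top$.

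\textbf{Plan of the proof.} The key observation is that the three identities decompose into two independent claims: first the conjugation relation $C^{-\alpha} = \overline{C^\alpha}$, and second the symmetry-under-conjugation relation $\overline{C^\alpha} = (C^\alpha)^\top$. The latter follows immediately from the preceding Lemma~\ref{lem:herm}: since $C^\alpha$ is Hermitian, we have $(C^\alpha)^* = C^\alpha$, i.e. $\overline{(C^\alpha)}^\top = C^\alpha$, which upon transposing both sides yields $\overline{C^\alpha} = (C^\alpha)^\top$. So the entire content of the lemma reduces to establishing the first equality $C^{-\alpha} = \overline{C^\alpha}$.

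\textbf{Reducing to the Green's function.} I would trace the $\alpha$-dependence back through the definitions in \eqref{eq:psiC}. The matrix entry $C_{ij}^\alpha$ is built from $\psi_j^\alpha = (\S_D^{\alpha,0})^{-1}[\chi_{\p D_j}]$ by integrating over $\p D_i$. The operator $\S_D^{\alpha,0}$ has kernel $G^{\alpha,0}(x,y)$, so the plan is to determine how $G^{\alpha,0}$ transforms under $\alpha \mapsto -\alpha$. From the explicit series
\begin{equation*}
	G^{\alpha,k}(x,y) = \frac{1}{|Y|}\sum_{q\in\Lambda^*} \frac{e^{\iu(\alpha+q)\cdot(x-y)}}{k^2 - |\alpha+q|^2},
\end{equation*}
I would take the complex conjugate and simultaneously substitute $q \mapsto -q$ (a bijection of the dual lattice $\Lambda^*$). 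The denominator is real and invariant under both conjugation and $q \mapsto -q$, while the numerator becomes $e^{-\iu(\alpha-q)\cdot(x-y)} = e^{\iu(-\alpha+q)\cdot(x-y)}$ after relabelling. This should give the clean transformation rule $\overline{G^{\alpha,0}(x,y)} = G^{-\alpha,0}(x,y)$ (the kernel being real-symmetric in its spatial arguments for the static case). One should note that the formula is stated for $k \neq |\alpha+q|$, and the static limit $k=0$ requires $\alpha \neq 0$ so that no denominator vanishes; this is exactly the regime $\alpha \in Y^* \setminus \{0\}$ in which the capacitance matrix is defined.

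\textbf{Propagating through the layer potential and the inverse.} Since conjugating the kernel of $\S_D^{\alpha,0}$ gives the kernel of $\S_D^{-\alpha,0}$, and since the characteristic functions $\chi_{\p D_j}$ are real-valued and $\alpha$-independent, conjugating the defining equation $\S_D^{\alpha,0}[\psi_j^\alpha] = \chi_{\p D_j}$ yields $\S_D^{-\alpha,0}[\overline{\psi_j^\alpha}] = \chi_{\p D_j}$. By uniqueness of the solution (invertibility of $\S_D^{-\alpha,0}$ for $-\alpha \neq 0$, guaranteed by the stated invertibility result), this forces $\overline{\psi_j^\alpha} = \psi_j^{-\alpha}$. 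Finally I would conjugate the integral defining $C_{ij}^\alpha$: since integration over the real surface measure commutes with conjugation, $\overline{C_{ij}^\alpha} = -\int_{\p D_i} \overline{\psi_j^\alpha}\,\dx\sigma = -\int_{\p D_i} \psi_j^{-\alpha}\,\dx\sigma = C_{ij}^{-\alpha}$, which is precisely $C^{-\alpha} = \overline{C^\alpha}$.

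\textbf{Anticipated obstacle.} The computation is essentially a bookkeeping exercise in tracking conjugation through a chain of linear constructions, so no step is deep. The one point demanding genuine care is the kernel identity $\overline{G^{\alpha,0}} = G^{-\alpha,0}$: here one must handle the conditional convergence of the lattice sum correctly and justify pairing the $q$ and $-q$ terms, and one must confirm that the reindexing bijection $q \mapsto -q$ on $\Lambda^*$ is valid and that the denominator's symmetry is exact. Everything downstream — invertibility, uniqueness, and commuting conjugation with real integration — is routine once this transformation rule is in hand.
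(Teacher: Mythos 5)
Your proposal is correct and follows essentially the same route as the paper, whose entire proof is the one-line observation that the identity follows from $G^{-\alpha,k}=\overline{G^{\alpha,k}}$; you have simply filled in the propagation of this conjugation through the layer potential, its inverse, and the defining integral, and correctly noted that $\overline{C^\alpha}=(C^\alpha)^\top$ is just a restatement of Lemma~\ref{lem:herm}.
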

\begin{proof}
	The identity follows from the fact that $G^{-\alpha,k}=\overline{G^{\alpha,k}}$.
\end{proof}
\subsection{Time-modulated subwavelength resonators}
	\label{setting}
We seek solutions to  (\ref{eq:wave_transf}) with modulations given by  (\ref{modulation}). Since $e^{-\mathrm{i}\omega t}u(x,t)$ is a $T$-periodic function of $t$, we can write its Fourier series as
$$u(x,t)= e^{\iu \omega t}\sum_{n = -\infty}^\infty v_n(x)e^{\iu n\Omega t}.$$
In the frequency domain, we then have from (\ref{eq:wave_transf}) the following equation, for $n\in \Z$:
\begin{equation} \label{eq:freq}
	\left\{
	\begin{array} {ll}
		\ds \Delta {v_n}+ \frac{\rho_0(\omega+n\Omega)^2}{\kappa_0} {v_n}  = 0 & \text{in } Y \setminus \overline{D}, \\[0.3em]
		\ds \Delta v_{i,n}^* +\frac{\rho_r(\omega+n\Omega)^2}{\kappa_r} v_{i,n}^{**}  = 0 & \text{in } D_i, \\
		\nm
		\ds  {v_n}|_{+} -{v_n}|_{-}  = 0  & \text{on } \partial D, \\
		\nm
		\ds  \delta \frac{\partial {v_n}}{\partial \nu} \bigg|_{+} - \frac{\partial v_{i,n}^* }{\partial \nu} \bigg|_{-} = 0 & \text{on } \partial D_i, \\[0.3em]
		v_n(x)e^{\iu \alpha\cdot x} \text{ is $\Lambda$-periodic in $x$}.
	\end{array}
	\right.
\end{equation}
Here, $v_{i,n}^*(x)$ and $v_{i,n}^{**}(x)$ are defined through the convolutions
$$v_{i,n}^*(x) = \sum_{m = -\infty}^\infty r_{i,m} v_{n-m}(x), \quad  v_{i,n}^{**}(x) = \frac{1}{\omega+n\Omega}\sum_{m = -\infty}^\infty k_{i,m}\big(\omega+(n-m)\Omega\big)v_{n-m}(x),$$
where $r_{i,m}$ and $k_{i,m}$ are the Fourier series coefficients of $1/\rho_i$ and $1/\kappa_i$, respectively:
$$\frac{1}{\rho_i(t)} = \sum_{n = -\infty}^\infty r_{i,n} e^{\iu n \Omega t}, \quad \frac{1}{\kappa_i(t)} = \sum_{n = -\infty}^\infty k_{i,n} e^{\iu n \Omega t}.$$
We can assume that the solution is normalized as $\|v_0\|_{H^1(Y)} = 1$. Since $u$ is continuously differentiable in $t$, we then have as $n\to \infty$,
\begin{equation} \label{eq:reg_v}
	\|v_n\|_{H^1(Y)} = o\left(\frac{1}{n}\right).
\end{equation}  
We will consider the case when the modulation of $\rho$ and $\kappa$ consist of a finite Fourier series with a large number of nonzero Fourier coefficients: 
$$\frac{1}{\rho_i(t)} = \sum_{n = -M}^M r_{i,n} e^{\iu n \Omega t}, \qquad \frac{1}{\kappa_i(t)} = \sum_{n = -M}^M k_{i,n} e^{\iu n \Omega t},$$
for some $M\in \N$ satisfying
$$M = O\left(\delta^{-\gamma/2}\right),$$
for some $0 < \gamma < 1$. We seek subwavelength quasifrequencies $\omega$ of the wave equation \eqref{eq:wave_transf} in the sense of the following definition introduced in \cite{ammari2020time}.
 \begin{defn}[Subwavelength quasifrequency] \label{def:sub}
 		A quasifrequency $\omega = \omega(\delta) \in Y^*_t$ of \eqref{eq:wave_transf} is said to be a \emph{subwavelength quasifrequency} if there is a corresponding Bloch solution $u(x,t)$, depending continuously on $\delta$, which can be written as
 		$$u(x,t)= e^{\iu \omega t}\sum_{n = -\infty}^\infty v_n(x)e^{\iu n\Omega t},$$
 		where 
 		$$\omega \rightarrow 0 \ \text{and} \ M\Omega \rightarrow 0 \ \text{as} \ \delta \to 0,$$
 		for some integer-valued function $M=M(\delta)$ such that, as $\delta \to 0$, we have
 		$$\sum_{n = -\infty}^\infty \|v_n\|_{L^2(Y)} = \sum_{n = -M}^M \|v_n\|_{L^2(Y)} + o(1).$$
 	\end{defn}
In particular, we assume that the subwavelength quasifrequency $\omega$ and the frequency of modulation $\Omega$ have the same order:
$$\omega = O\left(\delta^{1/2}\right).$$ 	
%

The following is a capacitance matrix characterization of the band structure of time-dependent periodic systems of subwavelength resonators. 
\begin{thm}[\cite{ammari2020time}] \label{thm:pre}
		As $\delta \to 0$, the subwavelength quasifrequencies of the wave equation (\ref{eq:wave_transf}) are, to leading order, given by the quasifrequencies of the system of ODEs:
		\begin{equation}
		\label{Hill}
			\frac{\dx^2\phi}{\dx t^2}(t)+M^\alpha(t)\phi(t)=0,	
		\end{equation}
		where $M^\alpha$ is the matrix defined as
		\begin{equation*}
			M^\alpha(t)=\frac{\delta\kappa_r}{\rho_r}W_1(t)C^\alpha W_2(t)+W_3(t)
		\end{equation*}
		with $W_1,W_2$ and $W_3$ being the diagonal matrices with diagonal entries
		\begin{equation*}
			(W_1)_{ii}=\frac{\sqrt{\kappa_i}\rho_i}{\lvert D_i\rvert},\quad (W_2)_{ii}=\frac{\sqrt{\kappa_i}}{\rho_i},\quad (W_3)_{ii}=\frac{\sqrt{\kappa_i}}{2}\frac{\dx}{\dx\text{t}}\frac{\dx\kappa_i/\dx t}{\kappa_i^{3/2}}.
		\end{equation*}
\end{thm}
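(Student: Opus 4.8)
The plan is to reduce the PDE quasifrequency problem \eqref{eq:wave_transf} to the finite ODE system \eqref{Hill} through the layer-potential and capacitance-matrix machinery, working mode-by-mode in the Fourier-in-time representation \eqref{eq:freq} and retaining only leading-order contributions as $\delta\to0$. The starting observation is that, for each $n$, the exterior field $v_n$ solves a quasiperiodic Helmholtz equation with wavenumber $k_n^2=\rho_0(\omega+n\Omega)^2/\kappa_0$; since $\omega,\Omega=O(\delta^{1/2})$ and only the modes $|n|\le M$ with $M\Omega\to0$ contribute by Definition \ref{def:sub}, one has $k_n\to0$ uniformly over the active modes. I would therefore represent $v_n=\S_D^{\alpha,k_n}[\psi_n]$ and invoke \eqref{eq:Sexp} to replace $\S_D^{\alpha,k_n}$ by $\S_D^{\alpha,0}$ up to an $O(\delta)$ error, uniformly for $|\alpha|>c>0$, which is precisely where the invertibility of $\S_D^{\alpha,0}$ for $\alpha\neq0$ is used.

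First I would identify the leading-order interior behaviour: inside each $D_i$ the rescaled frequency coefficient is $O(\delta)$, so the interior field is constant in space to leading order and, by continuity of $v_n$ across $\p D$, equals its boundary trace; summing the Fourier modes back up I denote this trace $x_i(t)$, with mode coefficients $x_{i,n}$ and vector $\mathbf{x}_n=(x_{1,n},\dots,x_{N,n})$. Imposing $\S_D^{\alpha,k_n}[\psi_n]|_{\p D_i}=x_{i,n}$ and inverting $\S_D^{\alpha,0}$ gives, to leading order, $\psi_n=\sum_j x_{j,n}\psi_j^\alpha$, so that by \eqref{eq:psiC} and the fact that $\S_D^{\alpha,0}[\psi_j^\alpha]$ is constant on each $\p D_k$ (hence has vanishing interior normal derivative) one obtains $\int_{\p D_i}\p_\nu v_n|_+\,\dx\sigma=-(C^\alpha\mathbf{x}_n)_i$. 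Next I would integrate the interior equation in \eqref{eq:freq} over $D_i$, apply the divergence theorem, and substitute the transmission condition $\delta\,\p_\nu v_n|_+=\p_\nu v_{i,n}^*|_-$; the boundary flux becomes $-\delta(C^\alpha\mathbf{x}_n)_i$, and the remaining interior term $\tfrac{\rho_r(\omega+n\Omega)^2}{\kappa_r}\int_{D_i}v_{i,n}^{**}$ equals, to leading order, $|D_i|$ times the $n$-th coefficient of $-\tfrac{\rho_r}{\kappa_r}\p_t(\kappa_i^{-1}\p_t x_i)$.

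Then I would read the resulting recursion over $n$ as the Fourier coefficients of a single second-order ODE system in $t$: the convolution structure defining $v_{i,n}^*$ and $v_{i,n}^{**}$ is exactly multiplication by $1/\rho_i$ and $1/\kappa_i$ in the time domain, while multiplication of the $n$-th coefficient by $(\omega+n\Omega)$ corresponds to $\tfrac1\iu\p_t$. This yields the system $\tfrac{|D_i|}{\kappa_r}\p_t(\kappa_i^{-1}\p_t x_i)+\tfrac{1}{\rho_0}(C^\alpha\mathbf{x})_i=0$, with $1/\rho_0=\delta/\rho_r$ and a first-order term in $\dot x_i$ produced by $\dot\kappa_i$. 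The Liouville-type substitution $x_i=\kappa_i^{1/2}\phi_i$ then removes this first-order term; a direct computation confirms that it generates precisely the potential $(W_3)_{ii}=\tfrac{\sqrt{\kappa_i}}{2}\tfrac{\dx}{\dx t}\tfrac{\dx\kappa_i/\dx t}{\kappa_i^{3/2}}$, while the remaining diagonal weights $1/|D_i|$, $\kappa_i^{1/2}$ and the density factors recast the coupling into $\tfrac{\delta\kappa_r}{\rho_r}W_1C^\alpha W_2$, giving exactly \eqref{Hill}.

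The main obstacle is the rigorous justification of ``to leading order'' uniformly in $\delta$: one must control the full infinite system of coupled modes, not merely finitely many, and show that both the truncation to $|n|\le M$ and the replacement of $\int_{D_i}v_{i,n}^{**}$ by its constant-field value are admissible. The decisive inputs are the regularity and decay estimate \eqref{eq:reg_v} together with the truncation property built into Definition \ref{def:sub}, which bound the tail $\sum_{|n|>M}\|v_n\|$; one also needs the expansion \eqref{eq:Sexp} and the invertibility of $\S_D^{\alpha,0}$ to hold uniformly over all active wavenumbers $k_n$ (which is guaranteed by $M\Omega\to0$) and uniformly for $|\alpha|>c$. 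Establishing these uniform bounds, together with the continuous dependence on $\delta$ demanded by Definition \ref{def:sub}, is where the genuine difficulty lies; once they are in place the algebraic passage to \eqref{Hill} is routine.
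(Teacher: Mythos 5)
You are measuring your attempt against a result that this paper does not prove at all: \Cref{thm:pre} is imported verbatim from \cite{ammari2020time}, so the only meaningful comparison is with the derivation in that reference. Your skeleton does match it: Fourier decomposition in time, single-layer representation of the exterior modes together with the low-frequency expansion \eqref{eq:Sexp}, leading-order spatial constancy of the interior field, integration of the interior equation over $D_i$ combined with the divergence theorem and the transmission condition to bring in $C^\alpha$, resummation of the modes into a second-order system in $t$, and the Liouville substitution $x_i=\sqrt{\kappa_i}\,\phi_i$, which indeed produces exactly $(W_3)_{ii}$.

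There is, however, a genuine gap at the decisive step. The intermediate system you arrive at, $\tfrac{|D_i|}{\kappa_r}\p_t(\kappa_i^{-1}\p_t x_i)+\tfrac{1}{\rho_0}(C^\alpha\mathbf{x})_i=0$, contains no trace of the density modulations $\rho_i(t)$: in the integrated (solvability) identity the factor $1/\rho_i(t)$ from the interior operator and the factor $1/\rho_i(t)$ from the transmission condition cancel, which is precisely why your equation comes out $\rho$-free. A subsequent substitution involving only $\kappa_i$ can never produce the factors $\rho_i$ and $\rho_j^{-1}$ appearing in $W_1$ and $W_2$, so the clause ``\ldots and the density factors recast the coupling into $\tfrac{\delta\kappa_r}{\rho_r}W_1C^\alpha W_2$'' is a non sequitur: there are no density factors left in your equation to recast. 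The omission is not cosmetic. If your intermediate system were the correct leading-order model, then for constant $\kappa_i$ the matrix $M^\alpha$ would be time-independent no matter how $\rho_i$ is modulated, and pure density modulation would have no leading-order effect on the band structure; this contradicts the stated form (for $N\ge 2$ the off-diagonal entries $C^\alpha_{ij}\rho_i/\rho_j$ are genuinely time-dependent), contradicts Example \ref{ourmodulation}, where $M^\alpha(t)=K\rho(t)C^\alpha\rho(t)^{-1}$ is the entire source of the gap opening, and contradicts the numerics of Table \ref{tab:vals}, all of which use constant $\kappa$ and phase-shifted $\rho$. The missing idea is a more careful treatment of the mismatch between the quantity that is spatially constant in $D_i$ to leading order (namely $v_{i,n}^*$, i.e.\ $u/\rho_i$, since it is this function whose interior Neumann data is $O(\delta)$) and the quantity whose boundary traces drive the exterior capacitance coupling (namely $v_n$, i.e.\ $u=\rho_i\cdot(u/\rho_i)$), tracked through the non-commuting operations of time differentiation and multiplication by the $T$-periodic factors $\rho_i(t)$; the integrated solvability condition alone, which is all your argument uses, loses exactly the conjugation $\rho_i\,C^\alpha_{ij}\,\rho_j^{-1}$ that the theorem asserts. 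As written, your proof does not yield the stated $M^\alpha$.
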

 
\begin{rmk}
	\Cref{thm:pre} provides an asymptotic approximation, namely \eqref{Hill}, of the original wave equation \eqref{eq:wave_transf}, valid in the high-contrast regime $\delta \to 0$. In the following, we shall only consider reciprocity of the approximating equation \eqref{Hill}. If we can prove that \eqref{Hill} has broken reciprocity, it follows that  equation \eqref{eq:wave_transf} has broken reciprocity for small enough $\delta$.
\end{rmk}
\section{Preservation of the reciprocity property despite time-modulations}
In this section, we give some sufficient time-modulation conditions for the preservation of the reciprocity property.
\subsection{Reciprocity preserved when $N=1,2$}
We first prove that if the number of resonators in the unit cell is one or two then the reciprocity of \eqref{Hill} is preserved. 
\begin{thm}
		If the number of resonators in the unit cell are less than $3$ (i.e., $N=1,2$), then the reciprocity of \eqref{Hill} is always preserved.
\end{thm}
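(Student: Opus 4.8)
The plan is to show that for $N\le 2$ the two equations \eqref{Hill} at $\alpha$ and at $-\alpha$ are conjugate by a single \emph{constant} invertible matrix, which forces their quasifrequencies to coincide. Rewriting \eqref{Hill} as the first-order system $\dot\Phi = A^\alpha(t)\Phi$ with $\Phi=(\phi,\dot\phi)^\top$ and
\[
A^\alpha(t)=\begin{pmatrix} 0 & \mathrm{Id} \\ -M^\alpha(t) & 0 \end{pmatrix},
\]
the quasifrequencies are, by the Floquet theory recalled in \Cref{Floquettheory}, exactly the $\omega$ for which $e^{\iu\omega T}$ is an eigenvalue of the monodromy matrix $X^\alpha(T)$. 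Since similar matrices share their spectrum, it suffices to produce a constant invertible $U$ with $U M^\alpha(t) U^{-1}=M^{-\alpha}(t)$ for all $t$: the block matrix $\mathrm{diag}(U,U)$ then conjugates $A^\alpha$ into $A^{-\alpha}$, so by uniqueness of the fundamental solution $X^{-\alpha}(T)=\mathrm{diag}(U,U)\,X^\alpha(T)\,\mathrm{diag}(U,U)^{-1}$, and the two monodromy matrices have identical characteristic multipliers.

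To build $U$, I would use \Cref{cap}, which gives $C^{-\alpha}=\overline{C^\alpha}$, together with the fact that the modulation functions $\rho_i,\kappa_i$ are real, so that $W_1,W_2,W_3$ are real and diagonal. Hence $M^{-\alpha}(t)=\tfrac{\delta\kappa_r}{\rho_r}W_1(t)\,\overline{C^\alpha}\,W_2(t)+W_3(t)$, and the whole task reduces to conjugating $C^\alpha$ into $\overline{C^\alpha}$ by a constant matrix that commutes with the diagonal $W_j$, i.e. by a constant \emph{diagonal} matrix. For $N=1$ the capacitance matrix is a real scalar (Hermitian by \Cref{lem:herm}), so $C^{-\alpha}=C^\alpha$, whence $M^{-\alpha}=M^\alpha$ and reciprocity is immediate. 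For $N=2$, Hermiticity gives
\[
C^\alpha=\begin{pmatrix} a & b \\ \overline{b} & c\end{pmatrix},\qquad
\overline{C^\alpha}=\begin{pmatrix} a & \overline{b} \\ b & c\end{pmatrix},\qquad a,c\in\R,
\]
and writing $b=|b|e^{\iu\beta}$ one checks that the constant diagonal unitary $U=\mathrm{diag}(e^{-\iu\beta},e^{\iu\beta})$ satisfies $U C^\alpha U^{-1}=\overline{C^\alpha}=C^{-\alpha}$. Being diagonal, $U$ commutes with $W_1,W_2,W_3$, so $U M^\alpha(t) U^{-1}=M^{-\alpha}(t)$, as required.

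Combining the two steps, the monodromy matrices at $\alpha$ and $-\alpha$ are similar, hence have identical sets of quasifrequencies; and because $U$ is constant it maps an $\omega$-quasiperiodic solution to an $\omega$-quasiperiodic solution, so the quasiperiodicity class is preserved. This establishes reciprocity of \eqref{Hill} for $N=1,2$. The one genuinely delicate point — and precisely the reason the argument halts at $N=2$ — is the existence of this diagonal gauge $U$: a diagonal unitary can absorb the phase of the \emph{single} off-diagonal entry of a $2\times2$ Hermitian matrix and turn $C^\alpha$ into its complex conjugate, but for $N\ge 3$ a Hermitian matrix carries several independent off-diagonal phases that cannot in general be simultaneously conjugated into their conjugates by any diagonal (hence $W_j$-commuting) matrix. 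This obstruction is exactly what will later allow $M^{-\alpha}$ and $M^\alpha$ to be genuinely non-conjugate, opening the possibility of non-reciprocity when $N\ge 3$.
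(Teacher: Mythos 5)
Your proposal is correct and follows essentially the same route as the paper: both hinge on constructing a constant diagonal unitary that conjugates $C^\alpha$ into $\overline{C^\alpha}=C^{-\alpha}$ (your $U=\mathrm{diag}(e^{-\iu\beta},e^{\iu\beta})$ is, up to a global phase, the paper's $S=\mathrm{diag}(\overline{c}^2/|c|^2,1)$) and on the observation that such a diagonal matrix commutes with the diagonal $W_j$, so that $M^{\alpha}(t)$ and $M^{-\alpha}(t)$ are conjugate by a constant matrix. Your detour through the first-order system and the similarity of monodromy matrices is only a cosmetic repackaging of the paper's direct argument that conjugation by $S$ maps solutions of \eqref{Hill} at $\alpha$ to solutions at $-\alpha$ with the same quasifrequency.
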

\begin{proof}
	For $N=1$, $C^\alpha = C^{-\alpha}\in\mathbb{R}$. Hence the quasifrequencies are given by exactly the same equations. For $N=2$, we consider the general form of $M^\alpha(t)$ as in (\ref{Hill}):
	\begin{equation*}
		M^{\alpha}(t)=kW_1C^\alpha W_2 + W_3,
	\end{equation*}
  where $k$ is a real number and $W_1,W_2 \text{ and }W_3$ are diagonal. In view of \Cref{lem:herm}, we write the capacitance matrix as
  \begin{equation*}
  C^\alpha = \begin{pmatrix}
  	a &c\\
  	\overline{c} &b
  \end{pmatrix},
  \end{equation*} where $a,b \in \mathbb{R}$, and let
  \begin{equation*}
  	S = \begin{pmatrix}
  		\frac{(\overline{c})^2}{\lvert c\rvert ^2} & 0\\
 	0 & 1
  	\end{pmatrix},\ \ S^{-1}=S^{*}=	\begin{pmatrix}
			\frac{(c)^2}{\lvert c\rvert ^2} & 0\\
 		0 & 1
		\end{pmatrix}.
  \end{equation*} 
From Lemma \ref{cap}, we have
\begin{equation*}
\begin{split}
		S C^{\alpha}(t) S^*&= 
		\begin{pmatrix}
			\frac{(\overline{c})^2}{\lvert c\rvert ^2} & 0\\
 		0 & 1
		\end{pmatrix}	
		\begin{pmatrix}
			a & c\\
			\overline{c} &b
		\end{pmatrix}
		\begin{pmatrix}
			\frac{(c)^2}{\lvert c\rvert ^2} & 0\\
 		0 & 1
		\end{pmatrix}
		= \begin{pmatrix}
			a & \overline{c}\\
			{c} &b
		\end{pmatrix} = C^{-\alpha}.
\end{split}
\end{equation*}
Since diagonal matrices commute with each other, we have $SM^\alpha S^*=M^{-\alpha}$. As $S$ is time independent, we obtain that
\begin{equation*}
	M^{-\alpha}\phi =\frac{\dx\phi}{\dx t} \iff S M^{\alpha}S^*\phi=\frac{\dx\phi}{\dx t} \iff M^\alpha (S^*\phi)=\frac{\dx(S^*\phi)}{\dx t}.
\end{equation*}
This means that $\phi$ is a solution to (\ref{Hill}) with ${\alpha} \in Y^*$ if and only if $S\phi$ is that with ${-\alpha}$, having the same quasifrequency. 
\end{proof}
\subsection{Time-reversal symmetry preserves reciprocity}
When there are more than two resonators inside the unit cell, the reciprocity can be broken, as seen in the next section. Nevertheless, under the condition of time reversal symmetry, one can prove that the reciprocity is always preserved.
\begin{prop}
\label{reciprocallemma}
	If $\nu$ is a quasifrequency to the equation $M^{\alpha}(t)\phi(t)+(\dx^2\phi/\dx t^2)(t)=0$, then $-\overline{\nu}$ is a quasifrequency to the equation $M^{-\alpha}(t)\phi(t)+(\dx^2\phi/\dx t^2)(t)=0$. In particular, the real parts of the quasifrequency of the two equations differ in parity and the imaginary parts are the same. 
\end{prop}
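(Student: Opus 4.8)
The plan is to exploit the fact that \eqref{Hill} is second order in time with a \emph{real} differential operator, so that complex conjugation maps solutions at $\alpha$ to solutions at $-\alpha$. Concretely, suppose $\phi$ is a Bloch solution of $\dx^2\phi/\dx t^2 + M^\alpha(t)\phi = 0$ with quasifrequency $\nu$, i.e.\ $\phi(t+T) = e^{\iu \nu T}\phi(t)$. First I would take the complex conjugate of the whole equation. Since $\dx^2/\dx t^2$ has real coefficients, this yields $\dx^2\overline{\phi}/\dx t^2 + \overline{M^\alpha(t)}\,\overline{\phi} = 0$, so the task reduces to identifying $\overline{M^\alpha(t)}$.

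The key step is to show $\overline{M^\alpha(t)} = M^{-\alpha}(t)$. Recall $M^\alpha(t) = \tfrac{\delta\kappa_r}{\rho_r}W_1(t)C^\alpha W_2(t) + W_3(t)$, where $W_1, W_2, W_3$ are diagonal. Since the modulation profiles $\rho_i(t)$ and $\kappa_i(t)$ are real-valued, each $W_j(t)$ is a real matrix and hence invariant under conjugation; the only complex object is $C^\alpha$. Applying \Cref{cap}, which gives $\overline{C^\alpha} = C^{-\alpha}$, one obtains $\overline{M^\alpha(t)} = \tfrac{\delta\kappa_r}{\rho_r}W_1(t)C^{-\alpha}W_2(t) + W_3(t) = M^{-\alpha}(t)$. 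Therefore $\overline{\phi}$ solves the $(-\alpha)$-equation $\dx^2\overline{\phi}/\dx t^2 + M^{-\alpha}(t)\overline{\phi} = 0$.

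It remains to read off the quasifrequency of $\overline{\phi}$. Conjugating $\phi(t+T) = e^{\iu\nu T}\phi(t)$ gives $\overline{\phi}(t+T) = e^{-\iu\overline{\nu}T}\overline{\phi}(t) = e^{\iu(-\overline{\nu})T}\overline{\phi}(t)$, so $\overline{\phi}$ is a Bloch solution at $-\alpha$ with quasifrequency $-\overline{\nu}$, proving the first assertion. The refinement then follows by writing $-\overline{\nu} = -\Re(\nu) + \iu\,\mathrm{Im}(\nu)$: the real part is sent to its opposite (the claimed ``parity'' statement, to be read modulo $\Omega$ in $Y_t^*$) while the imaginary part is unchanged.

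I do not expect a serious obstacle here; the whole argument hinges on the single conjugation identity $\overline{M^\alpha} = M^{-\alpha}$. The two points that require care are (i) confirming that every factor of $M^\alpha$ other than $C^\alpha$ is genuinely real --- in particular that $W_3$, which involves $\tfrac{\dx}{\dx t}\big((\dx\kappa_i/\dx t)/\kappa_i^{3/2}\big)$, remains real, which holds because $\kappa_i$ is a real $C^1$ function; and (ii) doing the quasifrequency bookkeeping consistently modulo $\Omega$, since $\nu \in Y_t^* = \mathbb{C}/(\Omega\mathbb{Z})$ and the sign flip of $\Re(\nu)$ must be interpreted in this quotient.
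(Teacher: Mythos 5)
Your proposal is correct and follows essentially the same route as the paper's proof: conjugate the equation, use $\overline{M^{\alpha}(t)}=M^{-\alpha}(t)$ (which the paper asserts directly and you justify via the reality of $W_1,W_2,W_3$ and Lemma~\ref{cap}), and read off the quasifrequency $-\overline{\nu}$ from the conjugated Bloch condition. The extra care you take in verifying the conjugation identity and in interpreting the sign flip modulo $\Omega$ is a welcome elaboration but does not change the argument.
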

\begin{proof}
	Let $\nu$ be a quasifrequency associated with  $M^{\alpha}(t)\phi(t) + (\dx^2\phi/\dx t^2)(t)=0$. This means that there is a solution $\phi$ to this system of ODEs such that $\phi(t+T)=e^{\iu \nu T}\phi(t)$. Since $M^{\alpha}(t)=\overline{M^{-\alpha}(t)}$, $\overline{\phi(t)}$ is then a solution to  $M^{-\alpha}(t)\phi(t)+(\dx^2\phi/\dx t^2)(t)=0$, having the quasifrequency $\overline{e^{\iu \nu T}}=e^{-\iu \bar{\nu} T}$.
\end{proof}
\begin{rmk}
	By Proposition \ref{reciprocallemma}, for the purpose of reciprocity, we should compare the real parts of the quasifrequencies associated with $\alpha$ and $-\alpha$. 
\end{rmk}
\begin{thm}
\label{timereversalthm}
	Let $t_0\in \mathbb{R}$ denote some initial time. If the time-modulation is time reversal symmetric, i.e., $M^{\alpha}(t)=M^{\alpha}(t_0-t)$ for all time $t\in \mathbb{R}$, then the reciprocity of \eqref{Hill} is preserved. In particular, without time-modulation, reciprocity of \eqref{Hill} is always preserved.
\end{thm}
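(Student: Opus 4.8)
The plan is to manufacture, from any Bloch solution of the $\alpha$-equation, an explicit Bloch solution of the $-\alpha$-equation whose quasifrequency has the \emph{same real part}; by the remark following \Cref{reciprocallemma}, matching real parts (for both $\alpha\mapsto-\alpha$ and $-\alpha\mapsto\alpha$) is exactly what reciprocity of \eqref{Hill} means here. The natural map combines the two symmetries already at hand: complex conjugation, which by \Cref{reciprocallemma} (via $M^\alpha(t)=\overline{M^{-\alpha}(t)}$, a consequence of \Cref{cap} and the reality of $W_1,W_2,W_3$) interchanges $\alpha$ and $-\alpha$ while sending $\nu\mapsto-\overline{\nu}$; and the time reversal $t\mapsto t_0-t$, which the hypothesis $M^\alpha(t)=M^\alpha(t_0-t)$ promotes to a symmetry of the equation. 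Concretely, given a quasifrequency $\nu$ at $\alpha$ with Bloch solution $\phi$ satisfying $\phi''+M^\alpha(t)\phi=0$ and $\phi(t+T)=e^{\iu\nu T}\phi(t)$, I would set $\chi(t):=\overline{\phi(t_0-t)}$ and claim $\chi$ is a Bloch solution of the $-\alpha$-equation with quasifrequency $\overline{\nu}$.

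First I would check that $\chi$ solves the $-\alpha$-equation. Differentiating twice gives $\chi''(t)=\overline{\phi''(t_0-t)}=-\overline{M^\alpha(t_0-t)}\,\overline{\phi(t_0-t)}$; the time-reversal hypothesis replaces $M^\alpha(t_0-t)$ by $M^\alpha(t)$, and conjugating turns $\overline{M^\alpha(t)}$ into $M^{-\alpha}(t)$, so $\chi''+M^{-\alpha}(t)\chi=0$. Next I would track the quasiperiodicity: since $\phi(s-T)=e^{-\iu\nu T}\phi(s)$, one gets $\chi(t+T)=\overline{\phi((t_0-t)-T)}=\overline{e^{-\iu\nu T}\phi(t_0-t)}=e^{\iu\overline{\nu}T}\chi(t)$, identifying $\overline{\nu}$ as the quasifrequency of $\chi$ for the $-\alpha$-equation. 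Because $\operatorname{Re}\overline{\nu}=\operatorname{Re}\nu$, every real part occurring at $\alpha$ also occurs at $-\alpha$. Conjugating the hypothesis $M^\alpha(t)=M^\alpha(t_0-t)$ shows the same time-reversal symmetry holds for $M^{-\alpha}$, so exchanging the roles of $\alpha$ and $-\alpha$ yields the reverse inclusion; hence the sets of real parts of the quasifrequencies at $\alpha$ and $-\alpha$ coincide, which is reciprocity.

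The final assertion about the unmodulated case is then immediate: with no time-modulation $M^\alpha$ is constant in $t$, so $M^\alpha(t)=M^\alpha(t_0-t)$ holds trivially for every $t_0$ and the theorem applies. I expect the only genuine subtlety to be bookkeeping: ensuring that under the combined operation $\phi\mapsto\overline{\phi(t_0-\cdot)}$ the sign of the Floquet exponent and the conjugation land together so that the output quasifrequency is $\overline{\nu}$ rather than $-\overline{\nu}$, and keeping in mind that the reciprocity being established is the real-part statement from the remark rather than literal equality of the (possibly complex) quasifrequency sets. One can view the construction as precisely the composition of the conjugation step of \Cref{reciprocallemma} with the $\nu\mapsto-\nu$ pairing that time-reversal symmetry forces within a single equation, and since $t\mapsto t_0-t$ and conjugation are admissible on the space of Bloch solutions, no regularity issues arise.
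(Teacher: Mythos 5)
Your proposal is correct and follows essentially the same route as the paper's proof: both construct the map $\phi\mapsto\overline{\phi(t_0-\cdot)}$ and use $\overline{M^{\alpha}(t_0-t)}=M^{-\alpha}(t)$ to land in the reciprocal equation. If anything, you are slightly more careful than the paper in tracking that the resulting quasifrequency is $\overline{\nu}$ (the paper states $\nu$) and in invoking the real-part criterion from the remark after \Cref{reciprocallemma} to close the argument.
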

\begin{proof}
	Pick a solution $\phi$ to \eqref{Hill} associated to the quasifrequency $\nu$, and define $\psi(t) = \overline{\phi(t_0-t)}$. Then $\psi$ is $\nu$-quasiperiodic.  Moreover, since $M^{\alpha}(t)\phi(t) + (\dx^2\phi/\dx t^2)(t)=0$ we find that $\psi$ satisfies
	$$\overline{M^{\alpha}(t_0-t)}\psi(t) + \frac{\dx^2\psi}{\dx t^2}(t)=0.$$
	Since $\overline{M^{\alpha}(t_0-t)} = M^{-\alpha}(t)$, we conclude that $\psi$ solves the reciprocal equation corresponding to the same quasifrequencies. We conclude that the sets of quasifrequencies at $\alpha$ and at $-\alpha$ coincide.
\end{proof}
\begin{rmk}
	The statements and arguments of both \Cref{reciprocallemma} and  \Cref{timereversalthm} easily generalize to the wave equation \eqref{eq:wave_transf}.
\end{rmk}
\begin{corollary}
\label{commutationthm}
	Let $\mathcal{R}$ denote the diagonal matrix $\mathrm{diag}(\rho_1, \ldots, \rho_N)$. If $\kappa_i$  is constant for all $i$, while $\mathcal{R}$ and $C^{\alpha}$ commute, then the reciprocity property is preserved.  
\end{corollary}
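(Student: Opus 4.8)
The plan is to show that, under the two stated hypotheses, the modulation matrix $M^\alpha(t)$ collapses to a \emph{time-independent} matrix, after which reciprocity follows immediately from \Cref{timereversalthm}.

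First I would record the consequences of the assumption that each $\kappa_i$ is constant. On the one hand, the entries $\sqrt{\kappa_i}$ appearing in $W_1$ and $W_2$ are then constants; on the other hand $W_3 \equiv 0$, since $(W_3)_{ii} = \frac{\sqrt{\kappa_i}}{2}\frac{\dx}{\dx t}\big(\frac{\dx \kappa_i/\dx t}{\kappa_i^{3/2}}\big)$ vanishes once $\dx \kappa_i/\dx t = 0$. Writing $K = \mathrm{diag}(\sqrt{\kappa_i})$ and $L = \mathrm{diag}(1/\lvert D_i\rvert)$ for the constant diagonal matrices, and $\mathcal{R}(t) = \mathrm{diag}(\rho_1(t),\ldots,\rho_N(t))$, the surviving factors factorise as $W_1 = KL\,\mathcal{R}$ and $W_2 = K\,\mathcal{R}^{-1}$. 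Hence all the time dependence of $M^\alpha(t) = \frac{\delta \kappa_r}{\rho_r} W_1 C^\alpha W_2$ is carried by $\mathcal{R}(t)$.

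The key step is the algebraic simplification exploiting the commutation hypothesis. Since $K$ and $L$ are diagonal they commute with $\mathcal{R}$ and $\mathcal{R}^{-1}$, so I would pull the constant diagonal factors out and isolate the conjugation $\mathcal{R}(t)\, C^\alpha\, \mathcal{R}(t)^{-1}$. By assumption $\mathcal{R}$ and $C^\alpha$ commute (at every time $t$), so this conjugation returns $C^\alpha$ unchanged and
$$M^\alpha(t) = \frac{\delta \kappa_r}{\rho_r}\, K L\, C^\alpha\, K,$$
which no longer depends on $t$. I expect this reduction to be the only real content of the proof: the whole point is that the commutation of $\mathcal{R}(t)$ with $C^\alpha$ is precisely what cancels the $\mathcal{R}\,(\cdot)\,\mathcal{R}^{-1}$ pair. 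The one thing to be careful about is that the commutation must be read as holding for all $t$; equivalently, for each $i\neq j$ either $\rho_i(t)=\rho_j(t)$ or $C^\alpha_{ij}=0$, which is automatic, for instance, when all the densities are modulated identically.

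Finally, a constant matrix trivially satisfies the time-reversal symmetry condition $M^\alpha(t) = M^\alpha(t_0 - t)$ of \Cref{timereversalthm} for any choice of $t_0$; indeed this is exactly the ``without time-modulation'' situation highlighted in that theorem. Invoking \Cref{timereversalthm} then yields that the set of quasifrequencies of \eqref{Hill} at $\alpha$ coincides with the set at $-\alpha$, i.e. reciprocity is preserved, which completes the argument.
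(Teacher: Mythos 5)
Your proof is correct and follows essentially the same route as the paper's: both arguments reduce $M^\alpha(t)$ to a time-independent matrix by using $W_3\equiv 0$, factoring the time dependence into the conjugation $\mathcal{R}\,C^\alpha\,\mathcal{R}^{-1}$, cancelling it via the commutation hypothesis, and then invoking \Cref{timereversalthm}. Your write-up simply makes explicit the factorisation $W_1=KL\,\mathcal{R}$, $W_2=K\,\mathcal{R}^{-1}$ that the paper's one-line proof leaves implicit.
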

\begin{proof}
	Under the above assumption, $M^{\alpha}(t)=KC^{\alpha}$, where $K$ is some constant matrix and $C^{\alpha}$ is the capacitance matrix. By Theorem \ref{timereversalthm}, the reciprocity is preserved.
\end{proof}

\section{Asymptotic analysis of the non-reciprocity property}
When $N\geq 3$, reciprocity can be broken with time-modulation in $\rho(t)$ and $\kappa(t)$. In this section, in order to describe this non-reciprocity, we study the case of weak time-modulation where $M^\alpha$ is the sum of a constant
matrix and a small periodic perturbation. In other words, we assume that $M^\alpha(t)$ is an analytic function of $\varepsilon$ at $\varepsilon=0$ and can be written as 
$$M^\alpha(t) = M^\alpha_0 + \varepsilon M_1^\alpha(t) + \ldots + \varepsilon^n M_n^\alpha(t) + \ldots,$$
where $\varepsilon > 0$ is some small parameter describing the amplitude of the time-modulation and $ M^\alpha_0$ corresponds to the unmodulated case. Moreover, we assume that the above series converges for $|\varepsilon| < r_0$, where $r_0$ is independent of $t$. This holds true since the modulations in $\rho$ and $\kappa$ are with finite Fourier coefficients. 

To tackle the problem of non-reciprocity, we will use the asymptotic Floquet analysis developed in \cite{TheaThesis}, which is a combination of perturbation analysis and Floquet theory; see also \cite{Yakubovich}. Starting with the second-order ODE \eqref{Hill}, we can rewrite it into
\begin{equation}\label{eq:1dsys}
	\frac{\dx y}{\dx t}(t) = \widetilde{A}(t)y(t), \qquad \widetilde{A}=\begin{pmatrix}
	0 & \text{Id}\\
	-M^\alpha(t) & 0
\end{pmatrix}.
\end{equation}
We aim to give an asymptotic analysis of the quasifrequencies associated with $\alpha$ and $-\alpha$ in terms of $\varepsilon$. By Floquet theory as in Section \ref{Floquettheory}, we have $X(T)=e^{FT}$ so that the Floquet exponents are given by the eigenvalues of $F$. Our asymptotic analysis amounts to explicitly expand the Floquet matrix $F$, and then to apply eigenvalue perturbation theory to compute the quasifrequencies.

As an illustrative example, we will often consider the case where $\kappa $ is constant and $\rho$ is given by
\begin{equation} \label{example}
	\rho_i(t)= \frac{1}{1+\varepsilon \text{cos}(\Omega t + \phi_i)}, \quad i=1, \ldots, N, 
\end{equation}
where $\Omega$ is the frequency of the modulation and $\phi_i$ is a phase shift between the resonators.
\subsection{Floquet matrix elements}
In this section, we describe the asymptotic Floquet analysis in a general setting. We consider  the perturbed system of linear ODEs:
\begin{equation}
\label{varepsilonODE}
	\frac{\dx y}{\dx t}(t) = A_\varepsilon(t)y(t), 
\end{equation}
	and assume that the  $T$-periodic continuous matrix $A_\varepsilon(t)$ is an analytic function of $\varepsilon$ at $\varepsilon=0$ and has the following expansion: 
	\begin{equation} \label{expA}
	A_\varepsilon(t)=A_0+\varepsilon A_1(t) +\varepsilon^2 A_2(t) + 
\ldots + \varepsilon^n A_n(t) + \ldots,
	\end{equation}
	as $\varepsilon \rightarrow 0$. By Floquet's theorem (Theorem \ref{floquetthm}), the fundamental solution of \eqref{varepsilonODE} can be written as 
	\begin{equation}
		\label{floquetsolution}
		X_\varepsilon(t)=P_\varepsilon(t)e^{F_\varepsilon t}	,
	\end{equation}
	where $P_\varepsilon(t)$ is $T$-periodic with $P_\varepsilon(0)= \mathrm{Id}$ and $F_\varepsilon$ is constant in time. Crucially, we assume that	\begin{itemize}
		\item[(i)] $A_0$ is constant in time and diagonal;
		\item[(ii)] $F_0$ has no distinct eigenvalues which are congruent modulo $\tfrac{2\pi \iu}{T}$;
		\item[(iii)] the series in (\ref{expA}) is convergent for $|\varepsilon| < r_0$, where $r_0$ is independent of $t$.
	\end{itemize}
	Under these assumptions, it follows from \cite{Yakubovich} that the matrices $P_\varepsilon$ and $F_\varepsilon$ are analytic functions of $\varepsilon$ at $\varepsilon=0$ and therefore, they can be expanded as follows:
	\begin{equation}
	P_\varepsilon(t)=P_0(t)+\varepsilon P_1(t) +\varepsilon^2 P_2(t) + 
\ldots \ \text{and}\ \ F_\varepsilon=F_0+\varepsilon F_1 +\varepsilon^2 F_2 + 
\ldots.
	\end{equation}
By inserting (\ref{floquetsolution}) into (\ref{varepsilonODE}), we derive the following systems of ODEs:
\begin{equation} \left\{
\label{inductive}
\begin{split}
	&\frac{\dx P_0}{\dx t}(t)=A_0P_0(t)-P_0(t)F_0, \\
	&\frac{\dx P_n}{\dx t}(t)=A_0P_n(t)-P_n(t)F_0 +\sum_{i=1}^n(A_i(t)P_{n-i}(t)-P_{n-i}(t)F_i)\ \ \text{for}\ n\geq 1,
\end{split}	\right.
\end{equation}
with the initial conditions $P_0(0)=$ Id and $P_n(0)=0$ if $n\geq 1$. 

We remark that both $A_0$ and $F_0$ correspond to the unperturbed band functions (i.e., those associated with the unmodulated periodic system $\varepsilon = 0$). Nevertheless, in order to satisfy assumption (ii) above, we choose $F_0$ so that all eigenvalues are inside the first Brillouin  zone. In other words, $\text{Im}(\sigma(F_0))\subset [-\Omega/2,\Omega/2),$ where $\sigma(F_0)$ denotes the set of eigenvalues of $F_0$ (for further intuition on this folding, we refer to \Cref{fig:oneDresonators}). Since $F_0$ is defined modulo $\frac{2\pi\iu}{T}$, such choice is always possible, and is described in the following result \cite{TheaThesis}. 
\begin{lem}
	$A_0-F_0$ takes the diagonal form $\frac{2\pi \mathrm{i}}{T}\mathrm{diag}(m_1,m_2,\ldots,m_N)$, where $m_i$ is the folding number of $(A_0)_{ii}$ as in Definition \ref{foldingnumber}.
\end{lem}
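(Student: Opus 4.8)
The plan is to read off $F_0$ explicitly from the unperturbed problem and then compare its diagonal entries with those of $A_0$ using the folding-number bookkeeping of \Cref{foldingnumber}. At $\varepsilon=0$ the system \eqref{varepsilonODE} reduces to $\dx y/\dx t = A_0 y$ with the constant matrix $A_0$, so its fundamental solution satisfying $X_0(0)=\mathrm{Id}$ is simply $X_0(t)=e^{A_0 t}$. On the other hand, the Floquet decomposition \eqref{floquetsolution} at $\varepsilon=0$ reads $X_0(t)=P_0(t)e^{F_0 t}$ with $P_0$ being $T$-periodic and $P_0(0)=\mathrm{Id}$. Evaluating at $t=T$ and using $P_0(T)=P_0(0)=\mathrm{Id}$ yields the monodromy identity $e^{F_0 T}=X_0(T)=e^{A_0 T}$.

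Next I would use that $A_0$ is diagonal (assumption (i)) to take $F_0$ diagonal as well: since $e^{A_0 T}$ is diagonal, a diagonal branch of its matrix logarithm exists, and this is exactly the $F_0$ selected in the text so that $\text{Im}(\sigma(F_0))\subset[-\Omega/2,\Omega/2)$. Writing $a_i:=(A_0)_{ii}$ and $f_i:=(F_0)_{ii}$, the monodromy identity reduces to the scalar relations $e^{a_i T}=e^{f_i T}$, i.e.\ $e^{(a_i-f_i)T}=1$. Hence $(a_i-f_i)T\in 2\pi\iu\Z$, so there are integers $m_i$ with
\begin{equation*}
	a_i-f_i=\frac{2\pi\iu}{T}\,m_i, \qquad i=1,\ldots,N,
\end{equation*}
which is already the claimed diagonal form of $A_0-F_0$; it remains only to identify each $m_i$ with the folding number of $(A_0)_{ii}$.

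To do this I would separate real and imaginary parts of the last display: the real parts satisfy $\text{Re}(a_i)=\text{Re}(f_i)$, while $\text{Im}(a_i)=\text{Im}(f_i)+m_i\Omega$, using $\Omega=2\pi/T$. By the choice of $F_0$ we have $\text{Im}(f_i)\in[-\Omega/2,\Omega/2)$, so this is precisely a decomposition of the form $\omega_A=\omega_0+m\Omega$ in \Cref{foldingnumber}, with $\omega_A=\text{Im}(a_i)$, $\omega_0=\text{Im}(f_i)$ and $m=m_i$. Since $a_i$ is an eigenvalue of the diagonal matrix $A_0$ and the representation with $\omega_0\in[-\Omega/2,\Omega/2)$ is unique, $m_i$ is exactly the folding number of $(A_0)_{ii}$, which completes the proof.

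The only genuinely delicate point is the legitimacy of taking $F_0$ diagonal and, simultaneously, inside the first Brillouin zone: a priori $F_0$ is determined by $e^{F_0 T}=e^{A_0 T}$ only up to adding $\tfrac{2\pi\iu}{T}$ times an integer diagonal matrix, and it is assumption (ii) (no two distinct eigenvalues congruent modulo $\tfrac{2\pi\iu}{T}$) together with the normalisation $\text{Im}(\sigma(F_0))\subset[-\Omega/2,\Omega/2)$ that removes this ambiguity and makes the folding number well defined. Everything else is an entrywise scalar computation.
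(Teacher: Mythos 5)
Your argument is correct: the monodromy identity $e^{F_0T}=e^{A_0T}$ obtained from $X_0(t)=e^{A_0t}$ and $P_0(T)=P_0(0)=\mathrm{Id}$, followed by the entrywise comparison of diagonal logarithms and the uniqueness of the decomposition $\mathrm{Im}(a_i)=\mathrm{Im}(f_i)+m_i\Omega$ with $\mathrm{Im}(f_i)\in[-\Omega/2,\Omega/2)$, is exactly the intended justification, and your closing remark correctly isolates the one delicate point, namely that $F_0$ is only determined up to adding $\tfrac{2\pi\iu}{T}$ times an integer diagonal matrix and that the Brillouin-zone normalisation removes this ambiguity. The paper states this lemma without proof (citing an external reference), so there is no in-text argument to compare against; your reconstruction is the natural one and is complete.
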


We now vectorize (\ref{inductive}) by first vectorizing $P_0$ in the basis $\{ E_{kj} \}_{\{k,j=1,\ldots,N\}}$, where the $kj$-th entry $E_{kj}$ is $1$ and $0$ otherwise. We denote the vectorized quantity by vect$(P_0)=p_0$ with $$p_0(0)= \text{vect} (\text{Id})=\sum_{k=1}^Ne_{(k-1)N+k},$$ where $e_{(k-1)N+k}$ is an $N^2\times 1$ vector with the $({(k-1)N+k})$-th entry being $1$ and $0$ otherwise. Equation (\ref{inductive}) reads
\begin{equation}
	\label{tensored} \left\{
	\begin{split}
		&\frac{\dx p_0}{\dx t} (t) = (1\otimes A_0 - F_0 \otimes 1)p_0,\\
		&\frac{\dx p_n}{\dx t}(t)=(1\otimes A_0 - F_0 \otimes 1)p_n+\sum_{i=1}^n(1\otimes A_i- F_i^\top \otimes 1)p_{n-1}\ \ \text{for}\ n\geq 1.
	\end{split} \right.
\end{equation}
Here,  we have used the following tensor notation:
\begin{equation*}
	1\otimes A = \left(\begin{smallmatrix}
		A  & & &\\
		& A & &\\
	    & & \sddots\\
		& & & A
	\end{smallmatrix}\right), \qquad B\otimes 1 = \left(\begin{smallmatrix}
		B_{NN} \, \text{Id} & \cdots & B_{1N} \, \text{Id} \\
   \vdots & \sddots & \vdots \\
   B_{N1} \, \text{Id} & \cdots & B_{NN} \, \text{Id}
	\end{smallmatrix}\right).	
\end{equation*}

The following result holds.
\begin{lem}
We have the following expansion for $p_0$:
	\begin{equation}
	\label{p0}
	p_0 = \sum_{k=1}^N\mathrm{exp}(\mathrm{i}\Omega m_k t)e_{(k-1)N+k}.
	\end{equation}
\end{lem}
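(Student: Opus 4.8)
The plan is to integrate the first equation of \eqref{inductive} (equivalently, the first equation of \eqref{tensored}) explicitly. The crucial observation is that both $A_0$ and $F_0$ are diagonal: $A_0$ is diagonal by assumption (i), and by the preceding lemma $A_0 - F_0 = \iu\Omega\,\mathrm{diag}(m_1,\ldots,m_N)$ (using $2\pi/T = \Omega$), so $F_0 = A_0 - \iu\Omega\,\mathrm{diag}(m_1,\ldots,m_N)$ is diagonal as well. This reduces everything to a decoupled, diagonal linear system, which is what makes the closed form \eqref{p0} possible.

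First I would work at the matrix level. One checks by direct differentiation that
$$P_0(t) = e^{A_0 t}\,e^{-F_0 t}$$
solves $\dx P_0/\dx t = A_0 P_0 - P_0 F_0$ with $P_0(0)=\mathrm{Id}$: indeed $e^{-F_0 t}F_0 = F_0 e^{-F_0 t}$, so the second term of the derivative is exactly $-P_0 F_0$. Uniqueness of solutions of linear ODEs guarantees this is \emph{the} solution. Since $A_0$ and $F_0$ are diagonal they commute, whence $P_0(t) = e^{(A_0-F_0)t}$, and substituting $A_0 - F_0 = \iu\Omega\,\mathrm{diag}(m_1,\ldots,m_N)$ yields the diagonal matrix $P_0(t) = \mathrm{diag}\bigl(e^{\iu\Omega m_1 t},\ldots,e^{\iu\Omega m_N t}\bigr)$. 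Vectorizing, the $(k,k)$ entry $e^{\iu\Omega m_k t}$ sits at index $(k-1)N+k$ and every off-diagonal position vanishes, which is precisely \eqref{p0}.

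Equivalently, and more in the spirit of \eqref{tensored}, I could argue entirely in the vectorized picture: when $A_0$ and $F_0$ are diagonal the generator $1\otimes A_0 - F_0\otimes 1$ is itself diagonal, with entry $(A_0)_{kk}-(F_0)_{kk} = \iu\Omega m_k$ at index $(k-1)N+k$. Because the initial value $p_0(0)=\sum_{k=1}^N e_{(k-1)N+k}$ is supported precisely on these ``diagonal'' indices and the generator never couples them to off-diagonal ones, each surviving component simply evolves as $e^{\iu\Omega m_k t}$, giving \eqref{p0}.

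I do not expect a serious obstacle: the content is a routine integration of a diagonal linear system. The only points needing care are the two structural facts that collapse the tensor operator to diagonal form, namely that assumption (i) together with the foregoing folding lemma force \emph{both} $A_0$ and $F_0$ to be diagonal, and the bookkeeping of the vectorization convention, i.e.\ confirming that the diagonal entries of $P_0$ land at the indices $(k-1)N+k$ appearing in \eqref{p0}, consistently with the prescribed initial value $p_0(0)=\sum_k e_{(k-1)N+k}$.
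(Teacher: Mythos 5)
Your proposal is correct and follows essentially the same route as the paper: the paper likewise observes that, with $A_0$ and $F_0$ both diagonal, the generator $1\otimes A_0 - F_0\otimes 1$ is diagonal with entries $(A_0-F_0)_{kk}=\iu\Omega m_k$ at the indices $(k-1)N+k$ carrying the initial data, and then simply exponentiates. Your additional matrix-level derivation via $P_0(t)=e^{A_0t}e^{-F_0t}$ and your explicit justification that $F_0$ is diagonal are harmless refinements of the same argument.
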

\begin{proof}
	Firstly, we write
	\begin{equation*}
			1\otimes A_0  - F_0 \otimes 1  = \left(\begin{smallmatrix}
				A_0  & & & \\
				& A_0 & & \\
				&  & \sddots & \\
				&  & & A_0
			\end{smallmatrix}\right)-\left(\begin{smallmatrix}
				(F_0)_{11} \, \text{Id}  & & & \\
				& (F_0)_{22} \, \text{Id} & & \\
				&  & \sddots & \\
				&  & & (F_0)_{NN}  \, \text{Id}
			\end{smallmatrix}\right).
	\end{equation*}
	We have 
	\begin{equation*}
		\begin{split}
			p_0 & = \text{exp}\big((1\otimes A_0  - F_0 \otimes 1) t\big)\; \sum_{k=1}^Ne_{(k-1)N+k}\\
			& = \sum_{k=1}^N\text{exp}((A_0-F_0)_{kk}t)e_{(k-1)N+k} \\
			& = \sum_{k=1}^N\text{exp}(\mathrm{i}\Omega m_k t)e_{(k-1)N+k}. \qedhere
		\end{split}
	\end{equation*}
\end{proof}
\noindent
Furthermore, from (\ref{inductive}) for $n=1$, it follows that
\begin{equation}
\label{Fourieransatz}
\begin{split}
	\frac{\dx p_1}{\dx t}(t)& =(1\otimes A_0- F_0 \otimes 1)p_1+(1\otimes A_1- F_1^\top \otimes 1)p_{0}\\
	& = (1\otimes A_0- F_0 \otimes 1)p_1+(1\otimes A_1- F_1^\top \otimes 1)\; \sum_{k=1}^N\text{exp}(i\Omega m_k t)e_{(k-1)N+k}.
\end{split}
\end{equation}
We insert the following Fourier series expansions: $$p_1(t)=\sum_{m\in \mathbb{Z}} \text{exp}(\mathrm{i}\Omega mt)p_1^{(m)}$$ and $$A_1(t)=\sum_{m\in \mathbb{Z}}\text{exp}(\mathrm{i}\Omega mt)A_1^{(m)}$$ into (\ref{Fourieransatz}) to obtain that
\begin{equation}
\begin{split}
	&\sum_{m\in \mathbb{Z}}\mathrm{i}\Omega m \; \text{exp}(\mathrm{i}\Omega mt)p_1^{(m)}\\
	=& \sum_{m\in \mathbb{Z}}(1 \otimes A_0 - F_0 \otimes 1) \text{exp}(\mathrm{i}\Omega mt)p_1^{(m)}
	 +(1\otimes A_1 - F_1^\top \otimes 1)\sum_{k=1}^N\text{exp}(\mathrm{i}\Omega m_k t)e_{(k-1)N+k}. \\
	\end{split}
	\end{equation}
	Then, it follows that
	\begin{multline}
	\label{solp1}
	\ds	\sum_{m\in \mathbb{Z}}\left(\mathrm{i}\Omega m- (1\otimes A_0- F_0 \otimes 1)\right)\text{exp}(\mathrm{i}\Omega mt)p_1^{(m)} \\ = \left(\sum_{m\in \mathbb{Z}}\text{exp}(\mathrm{i}\Omega mt) 1 \otimes A_1^{(m)}  - F_1^\top \otimes 1\right)\sum_{k=1}^N\text{exp}(\mathrm{i}\Omega m_k t)e_{(k-1)N+k}.
	\end{multline}
By comparing the coefficients in (\ref{solp1}), we conclude that for $m\in\mathbb{Z}$:
\begin{equation}
\label{importantp1}
(\mathrm{i}\Omega m- (1 \otimes A_0 - F_0 \otimes  1))p_1^{(m)}=\sum_{k=1}^N\left(1 \otimes A_1^{(m-m_k)}\right) e_{(k-1)N+k}-\sum_{k=1}^N(\delta_{mm_k} F_1^\top \otimes 1)e_{(k-1)N+k},
\end{equation}
where $\delta_{mm_k}$ is the Kronecker symbol. 
\begin{lem}
\label{diagonalF1}
	For every $j=1,\ldots,N$, we have $(F_1)_{jj} = (A_1^{(0)})_{jj}$.
\end{lem}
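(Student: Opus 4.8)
The plan is to read off $(F_1)_{jj}$ from the single scalar equation in \eqref{importantp1} whose left-hand operator becomes singular, i.e.\ to treat the claim as the Fredholm solvability condition at the resonant Fourier mode. First I would record a structural fact: by assumption (i) the matrix $A_0$ is diagonal, and since $A_0-F_0=\frac{2\pi\iu}{T}\mathrm{diag}(m_1,\ldots,m_N)$ the matrix $F_0$ is diagonal as well. Consequently $1\otimes A_0-F_0\otimes 1$ is a \emph{diagonal} $N^2\times N^2$ matrix, whose entry at the position $(k-1)N+j$ equals $(A_0)_{jj}-(F_0)_{kk}$. Along the ``doubly diagonal'' positions $(k-1)N+k$ this entry is $(A_0)_{kk}-(F_0)_{kk}=\iu\Omega m_k$.

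Next I would fix $j$, take $m=m_j$ in \eqref{importantp1}, and look only at the $(j-1)N+j$ component. Because the operator on the left is diagonal, this row decouples into a single scalar identity, and its prefactor $\iu\Omega m_j-\bigl((A_0)_{jj}-(F_0)_{jj}\bigr)=0$ vanishes. Hence the left-hand side is identically zero, independently of $p_1^{(m_j)}$, and the identity collapses to the requirement that the $(j-1)N+j$ component of the right-hand side of \eqref{importantp1} vanish at $m=m_j$; this is precisely the solvability condition enforcing periodicity of $P_1$ at its resonant mode.

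It then remains to contract the two sums on the right of \eqref{importantp1} into that component. Each basis vector $e_{(k-1)N+k}$ is concentrated in block $k$ at within-block position $k$, and $1\otimes A_1^{(m-m_k)}$ is block diagonal, so only the term $k=j$ reaches position $(j-1)N+j$, contributing $\bigl(A_1^{(m_j-m_j)}\bigr)_{jj}=(A_1^{(0)})_{jj}$. For the second sum I would write $F_1^\top\otimes 1$ in the Kronecker form given above, so that its $(j-1)N+j$ component applied to $e_{(k-1)N+k}$ is $(F_1)_{kj}\,\delta_{kj}$; again only $k=j$ survives, and together with $\delta_{m_j m_j}=1$ this yields the contribution $(F_1)_{jj}$. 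Equating the component to zero gives $(A_1^{(0)})_{jj}-(F_1)_{jj}=0$, which is the assertion.

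The computation is short, so the only real care point is the bookkeeping of the vectorization and the tensor conventions: one must verify that all off-diagonal and all $k\neq j$ contributions genuinely drop out of the chosen component, even when several folding numbers coincide (there the Kronecker factor $\delta_{m_j m_k}$ is nonzero for more than one $k$, but the positional factor $\delta_{kj}$ still isolates $k=j$). Assumption (ii) is what guarantees that $(j-1)N+j$ at $m=m_j$ is the only resonance feeding this component, so no additional coupling terms enter.
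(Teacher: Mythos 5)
Your argument is correct and coincides with the paper's own proof: both take $m=m_j$ in \eqref{importantp1}, observe that the $((j-1)N+j)$-th entry of the left-hand side vanishes (you make the diagonality of $1\otimes A_0-F_0\otimes 1$ explicit, which the paper leaves implicit), and then contract the right-hand side onto $e_{(j-1)N+j}$ so that only the $k=j$ terms survive, yielding $(A_1^{(0)})_{jj}=(F_1)_{jj}$. No gaps.
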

\begin{proof}
	For $j=1,\ldots,N$, we consider  (\ref{importantp1}) with $m=m_j$. The $((j-1)N+j)$-th entry of the left-hand side is $0$. Multiplying by $e_{(j-1)N+j}^\top$ gives us the $((j-1)N+j)$-th entry of the right-hand side as well:
	\begin{multline*}
		{e^\top_{(j-1)N+j}}\left(  \sum_{k=1}^N  1 \otimes A_1^{(m_j-m_k)} e_{(k-1)N+k}-\sum_{k=1}^N(\delta_{m_jm_k}F_1^\top \otimes 1) e_{(k-1)N+k} \right)\\
		=\sum_{k=1}^N \delta_{jk} \left( \left(A_1^{(m_j-m_k)}\right)_{jk}-(F_1^\top)_{jk} \right).
	\end{multline*}
	This means precisely that $(A_1^{(0)})_{jj}=(F_1)_{jj}$.
\end{proof}
\begin{lem}
\label{double}
	If $(F_0)_{ll}=(F_0)_{jj}$ for some $l\neq j$, then $(F_1)_{jl}=(A_1^{(m_l-m_j)})_{jl}$.
\end{lem}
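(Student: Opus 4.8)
The plan is to imitate the proof of \Cref{diagonalF1}, extracting now an \emph{off}-diagonal entry of $F_1$ from the single Fourier mode of \eqref{importantp1} at which the left-hand operator becomes singular. Everything rests on the fact that, since $A_0$ and $F_0$ are diagonal, the operator $1\otimes A_0-F_0\otimes 1$ is itself diagonal in the basis $\{E_{kj}\}$: on the vectorized basis element $e_{(k-1)N+j}$ it acts by the scalar $(A_0)_{kk}-(F_0)_{jj}$. Consequently \eqref{importantp1} decouples entry by entry, and to read off a prescribed entry of $F_1$ it suffices to analyze one scalar relation, avoiding any need to solve for $p_1$ itself.

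First I would pin down the resonant mode attached to the pair $(j,l)$. Recalling from the folding lemma that $A_0-F_0=\tfrac{2\pi\iu}{T}\,\mathrm{diag}(m_1,\dots,m_N)$, i.e. $(A_0)_{kk}=(F_0)_{kk}+\iu\Omega m_k$ with $m_k$ the folding number of \Cref{foldingnumber}, the diagonal entry of $1\otimes A_0-F_0\otimes 1$ at the index $(j-1)N+l$ equals $(A_0)_{jj}-(F_0)_{ll}$. The hypothesis $(F_0)_{ll}=(F_0)_{jj}$ collapses this to $(A_0)_{jj}-(F_0)_{jj}=\iu\Omega m_j$. Hence, evaluating \eqref{importantp1} at the Fourier index $m=m_j$ and projecting onto $e_{(j-1)N+l}$, the left-hand side acquires the prefactor $\iu\Omega m_j-\bigl((A_0)_{jj}-(F_0)_{ll}\bigr)=0$, so the undetermined component $\bigl(p_1^{(m_j)}\bigr)_{(j-1)N+l}$ drops out completely. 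This vanishing is the crux: at a true degeneracy that component of $p_1$ is \emph{not} fixed by \eqref{importantp1}, and it is exactly the disappearance of its coefficient that makes $F_1$ accessible.

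It then remains to evaluate the $e_{(j-1)N+l}$-component of the right-hand side of \eqref{importantp1} at $m=m_j$, just as in \Cref{diagonalF1}. In the first sum $\sum_k\bigl(1\otimes A_1^{(m_j-m_k)}\bigr)e_{(k-1)N+k}$ only one index $k$ survives the projection, contributing the single entry $(A_1^{(m_l-m_j)})_{jl}$ (the resonance $m=m_j$ fixing the Fourier label to $m_l-m_j$); in the second sum $\sum_k\delta_{m_j m_k}\bigl(F_1^\top\otimes 1\bigr)e_{(k-1)N+k}$ only $k=j$ survives, the factor $\delta_{m_j m_j}=1$ leaving $(F_1)_{jl}$. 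Equating the (now vanishing) left-hand side to this gives $(A_1^{(m_l-m_j)})_{jl}-(F_1)_{jl}=0$, which is the claim. I expect the only real obstacle to be bookkeeping: one must confirm that the degeneracy hypothesis forces the left-hand coefficient to vanish at, and only at, the mode $m=m_j$ (so that no information about $p_1$ is needed), and must track the two Kronecker deltas so that exactly one $k$ survives in each sum with the correct Fourier label $m_l-m_j$. No analytic ingredient beyond \eqref{importantp1} and the folding lemma enters.
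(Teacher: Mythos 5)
Your strategy is exactly the paper's, up to interchanging the roles of $j$ and $l$: the paper evaluates \eqref{importantp1} at $m=m_l$ and projects onto the $((l-1)N+j)$-th component, whereas you evaluate at $m=m_j$ and project onto the $((j-1)N+l)$-th component. Your identification of the resonant mode is correct: under $(F_0)_{ll}=(F_0)_{jj}$ the relevant diagonal entry $(A_0)_{jj}-(F_0)_{ll}$ of $1\otimes A_0-F_0\otimes 1$ equals $\iu\Omega m_j$, so the left-hand coefficient vanishes precisely at $m=m_j$ and the undetermined component of $p_1^{(m_j)}$ drops out. Your treatment of the second sum is also correct.

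The slip is in the first sum. The vector $\bigl(1\otimes A_1^{(m_j-m_k)}\bigr)e_{(k-1)N+k}$ is the vectorization of $A_1^{(m_j-m_k)}E_{kk}$, a matrix supported on the $k$-th \emph{column}; its $(j,l)$ entry is $(A_1^{(m_j-m_k)})_{jk}\delta_{kl}$. Hence the index surviving the projection onto $e_{(j-1)N+l}$ is $k=l$, and the Fourier label it carries is $m_j-m_l$, not $m_l-m_j$ (the resonance condition fixes $m$, but it is the column-support Kronecker delta that fixes $k$, and hence the label). Carried out correctly, your computation yields $(F_1)_{jl}=(A_1^{(m_j-m_l)})_{jl}$. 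This is consistent with the paper's own proof, which derives $(F_1)_{lj}=(A_1^{(m_l-m_j)})_{lj}$ (``row index minus column index'' in the superscript), and with \eqref{eq417} in \Cref{order1}; the superscript in the printed statement of \Cref{double} appears to have its indices transposed. In other words, your final formula matches the statement only because the sign error in your Fourier label compensates for what looks like a typo in the statement itself. Since the entire content of the lemma is to identify \emph{which} Fourier coefficient of $A_1$ enters $F_1$ --- and $A_1^{(m_j-m_l)}$ and $A_1^{(m_l-m_j)}$ are genuinely different matrices, feeding directly into \eqref{pvaluecompute} --- this sign must be fixed.
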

\begin{proof}
	Consider the equation (\ref{importantp1}) with $m=m_l$. If $(F_0)_{ll}=(F_0)_{jj}$ for some $l\neq j$, then the $((l-1)N+j)$-th entry of the left-hand side is $0$.  Multiplying by $e_{(l-1)N+j}^\top$ gives us the $((l-1)N+j)$-th entry of the right-hand side as well:
	\begin{equation*}
	\begin{split}
		& {e^\top_{(l-1)N+j}} \left( \sum_{k=1}^N  1 \otimes A_1^{(m_l-m_k)} e_{(k-1)N+k}-\sum_{k=1}^N(\delta_{m_lm_k}F_1^\top \otimes 1) e_{(k-1)N+k} \right)\\
		&\qquad = \sum_{k=1}^N \delta_{jk}\left(A_1^{(m_l-m_k)}\right)_{lk}-\sum_{k=1}^N\delta_{m_lm_k}(F_1^\top)_{jl}\delta_{kl}\\
		&\qquad = \left(A_1^{(m_l-m_j)}\right)_{lj}-(F_1)_{lj}. \qedhere
	\end{split}
	\end{equation*}
\end{proof}
\begin{rmk}
\Cref{double} does not provide the whole structure of $F_1$; for example, the entry $(F_1)_{lk}$ when $(F_1)_{ll}\neq (F_1)_{kk}$ is not provided. However, we will see that this is sufficient for the computation of the eigenvalue perturbation up to linear order in $\varepsilon$. As we will see, this is sufficient to demonstrate broken reciprocity.
\end{rmk}

\subsection{Asymptotic analysis of quasifrequency perturbations and reciprocity}Assume that $F=F_0+\varepsilon F_1+\varepsilon^2 F_2 + \ldots$ and $F_0$ is diagonal with respect to the basis vectors $w_1,\ldots,w_N$. 
\begin{defn}[Degenerate point] Suppose that $f_0$ is a multiple eigenvalue of $F_0$ of multiplicity $r \geq 2$. Then $f_0$ is called a {\em degenerate point}. If $f_0$ is simple, then we call it a {\em non-degenerate point}. \end{defn}
In the remainder of this paper, we will focus on the perturbation of degenerate points. Let $f_0$ be a degenerate point of multiplicity $r$ and let  $w_1,\ldots, w_r$ be its associated eigenvectors. Without loss of generality, we assume that $(F_0)_{ii} = f_0$ for $i=1,\ldots,r$, i.e., the diagonal entries of $A_0$ are permuted to make  the first $r$ diagonal entries of $F_0$ coincide. In this setting, there are standard expansions for the eigenvalue perturbation of $f_0$, which we outline in Appendix \ref{app:pert}.

For the purpose of reciprocity, we only need the upper left $r\times r$  block of $F_1$, which is given by Lemmas \ref{diagonalF1} and \ref{double}, to derive the first order perturbation at degenerate points or eigenvalues of $F_0$. The following result gives an asymptotic expansion of the quasifrequencies in terms of $\varepsilon$.
\begin{thm}
\label{order1}
Let $f_0$ be a degenerate point with multiplicity $r$. Then $F$ has associated eigenvalues given by 
$$f_0 + \varepsilon f_i + O(\varepsilon^2),$$
where $f_i$, for $i=1,\ldots,r$, are the eigenvalues of the $r\times r$ upper-left block of $F_1$, whose entries are given by
	\begin{equation} \label{eq417}
		(F_1)_{lk}=\left(A_1^{(m_l-m_k)}\right)_{lk} \ \text{for} \ l,k = 1,\ldots,r,
	\end{equation}
	where $m_l$ and $m_k$ denote the folding numbers of the $l$-th and $k$-th eigenvalues of $A_0$.
\end{thm}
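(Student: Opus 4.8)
The plan is to reduce the statement to classical degenerate eigenvalue perturbation theory and then to identify the relevant perturbation matrix with the block of $F_1$ already computed in Lemmas \ref{diagonalF1} and \ref{double}. Since $F = F_0 + \varepsilon F_1 + \varepsilon^2 F_2 + \cdots$ is an analytic family and $f_0$ is a semisimple eigenvalue of $F_0$ of multiplicity $r$ (semisimple because $F_0$ is diagonal), the $r$ eigenvalues of $F$ that collapse to $f_0$ at $\varepsilon = 0$ admit first-order expansions $f_0 + \varepsilon f_i + O(\varepsilon^2)$. The governing fact, which I would record in Appendix \ref{app:pert}, is that the coefficients $f_i$ are exactly the eigenvalues of the compression $P F_1 P$ of $F_1$ onto the degenerate eigenspace $E_{f_0}$, where $P$ is the spectral projection of $F_0$ onto $E_{f_0}$.

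First I would make this compression explicit. By the standing assumption the degenerate eigenspace is $E_{f_0} = \operatorname{span}(w_1,\ldots,w_r)$, and since both $A_0$ and $F_0$ are diagonal in the basis $w_1,\ldots,w_N$, the operator $P F_1 P$ restricted to $E_{f_0}$ is precisely the upper-left $r \times r$ block of $F_1$. Thus the $f_i$ are the eigenvalues of this block, and it remains only to supply its entries. Here the crucial observation is that every index pair $(l,k)$ with $l,k \in \{1,\ldots,r\}$ satisfies $(F_0)_{ll} = (F_0)_{kk} = f_0$, which is exactly the hypothesis under which $F_1$ has already been determined: the diagonal entries are given by Lemma \ref{diagonalF1} and the off-diagonal entries by Lemma \ref{double}. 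Assembling these contributions reproduces formula \eqref{eq417}. This also explains why the partial knowledge of $F_1$ flagged in the remark after Lemma \ref{double} suffices---the undetermined entries of $F_1$ correspond to index pairs with $(F_0)_{ll} \neq (F_0)_{kk}$ and hence lie strictly off the block $E_{f_0}$, so they never enter the first-order calculation.

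The main obstacle is not the bookkeeping of the entries but the perturbation-theoretic statement invoked in the first step. One must justify rigorously that near a semisimple eigenvalue the $r$ eigenvalue branches of $F_\varepsilon$ are governed to first order by $P F_1 P$, and that the remainder is genuinely $O(\varepsilon^2)$; this is classical (e.g.\ via a Schur-complement / Lyapunov--Schmidt reduction of $\det(F_\varepsilon - \lambda\,\mathrm{Id}) = 0$, or Kato's analytic perturbation theory) but requires care because $F_0$ and $F_1$ need not be normal, and because the branches are only guaranteed to be analytic when the reduced block $P F_1 P$ itself has simple eigenvalues. If that block is degenerate the expansions may involve fractional powers of $\varepsilon$ and the claimed $O(\varepsilon^2)$ error must be read in the appropriate Puiseux sense. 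I would therefore treat the clean diagonalizable case with distinct reduced eigenvalues in the main argument and isolate this classical reduction, together with the degenerate sub-case, in Appendix \ref{app:pert}.
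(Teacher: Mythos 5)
Your proposal is correct and follows essentially the same route as the paper: the paper's one-line proof likewise combines Lemmas \ref{diagonalF1} and \ref{double} with the effective Hamiltonian \eqref{effective} of Appendix \ref{app:pert}, whose first-order term $P F_1 P$ is exactly the compression onto $E_{f_0}$ that you identify with the upper-left $r\times r$ block. Your added caveat about non-normality and possible Puiseux behaviour when the reduced block is itself degenerate is a fair point that the paper's formal derivation does not address, but it does not change the argument.
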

\begin{proof}
Formula \eqref{eq417} follows from Lemmas \ref{diagonalF1} and \ref{double} together with  (\ref{effective}).
\end{proof}
Typically, it is sufficient to consider degenerate points of order $r=2$. Moreover, it is natural to assume that $A_1$ has no constant part; i.e., $A_1^{(0)} = 0$. In this setting, the $\varepsilon$-perturbations are given by the eigenvalues of the matrix
\begin{equation}
	\label{pvalue}
	\begin{pmatrix}
		0 & (F_1)_{12}\\
		(F_1)_{21} & 0	
	\end{pmatrix}.
\end{equation}
Thus, the eigenvalues $f$ of $F$ associated with $f_0$ are given by 
\begin{equation}\label{eq:pert}
	f = f_0 \pm \varepsilon\sqrt{(F_1)_{12}(F_1)_{21}} + O(\varepsilon^2).
\end{equation}

\begin{example}
\label{ourmodulation}
	Consider the same setting as the one in Section \ref{setting}. Let $\kappa_i$ be constant and let $N=3$. Define the time-modulation of $\rho_i,$ for $i=1,2,3,$  by
	(\ref{example}). 
	Then $M^\alpha(t)=K\rho(t)C^{\alpha}\rho^{-1}(t)$ and $K$ is a constant matrix. A direct computation shows that $M^\alpha(t)=K\widetilde{C}^\alpha(t)$, where $\widetilde{C}^\alpha_{ij}=C^{\alpha}_{ij}\rho_i\rho_j^{-1}$, for  $i,j=1,2,3$. By expanding  $\widetilde{C}^\alpha$, we obtain that
\begin{equation}
\label{Fourierexpansion}	
\widetilde{C}^\alpha_{ij}=C^{\alpha}_{ij}\rho_i\rho_j^{-1}=C^{\alpha}_{ij} \bigg(1+\varepsilon (-\text{cos}(\Omega t+\phi_i)+\text{cos}(\Omega t+\phi_j))+O(\varepsilon^2) \bigg). 
\end{equation}
We see that $M^\alpha(t)$ does not have a constant Fourier coefficient in the first $\varepsilon$-order. Therefore, non-degenerate points will depend quadratically on $\varepsilon$, whereas degenerate points will depend linearly on $\varepsilon$.
\end{example}

\begin{example}\label{ex}
Consider the same modulations as before with  phase shifts $\phi_1 = 0,\ \phi_2=\pi/2$ and $\phi_3 =\pi$. Suppose that the modulation frequency $\Omega$ is chosen such that the static system has a degenerate point, e.g., as in the case of a one-dimensional lattice with unit cell containing three resonators. The band structure of such structure is depicted in Figure \ref{fig:oneDresonators}, where we have a double degenerate point at $\alpha=\pm 2.23$ with different folding numbers: $m_1 = 0$ and $m_2 = 1$.
\begin{figure}[H]
	\begin{subfigure}[b]{0.5\linewidth}
		\begin{center}
      \includegraphics[height=7cm]{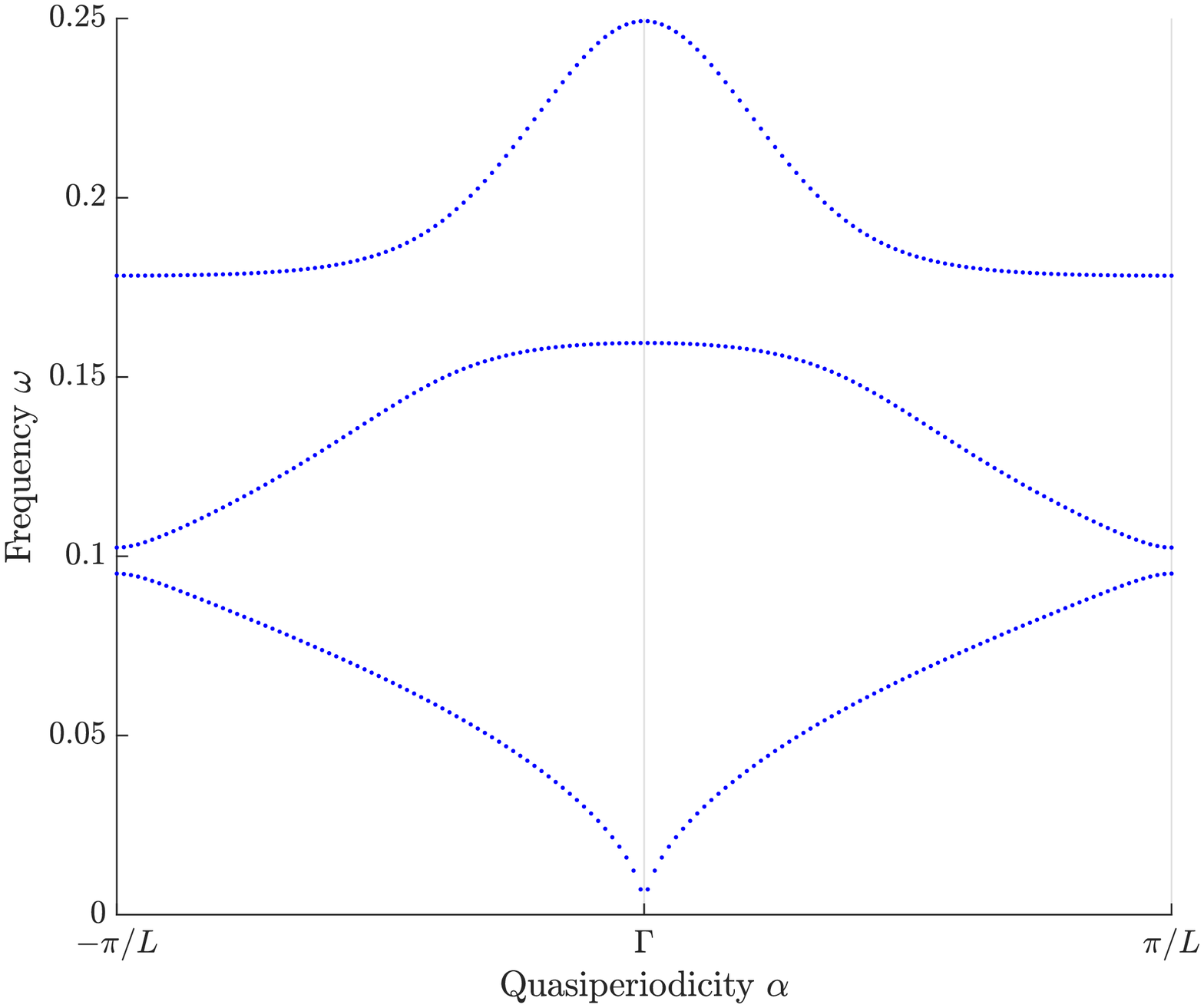}
		\caption{Unfolded band structure.}
		\end{center}	\end{subfigure}
	\begin{subfigure}[b]{0.5\linewidth}
	\begin{center}
\includegraphics[height=7cm]{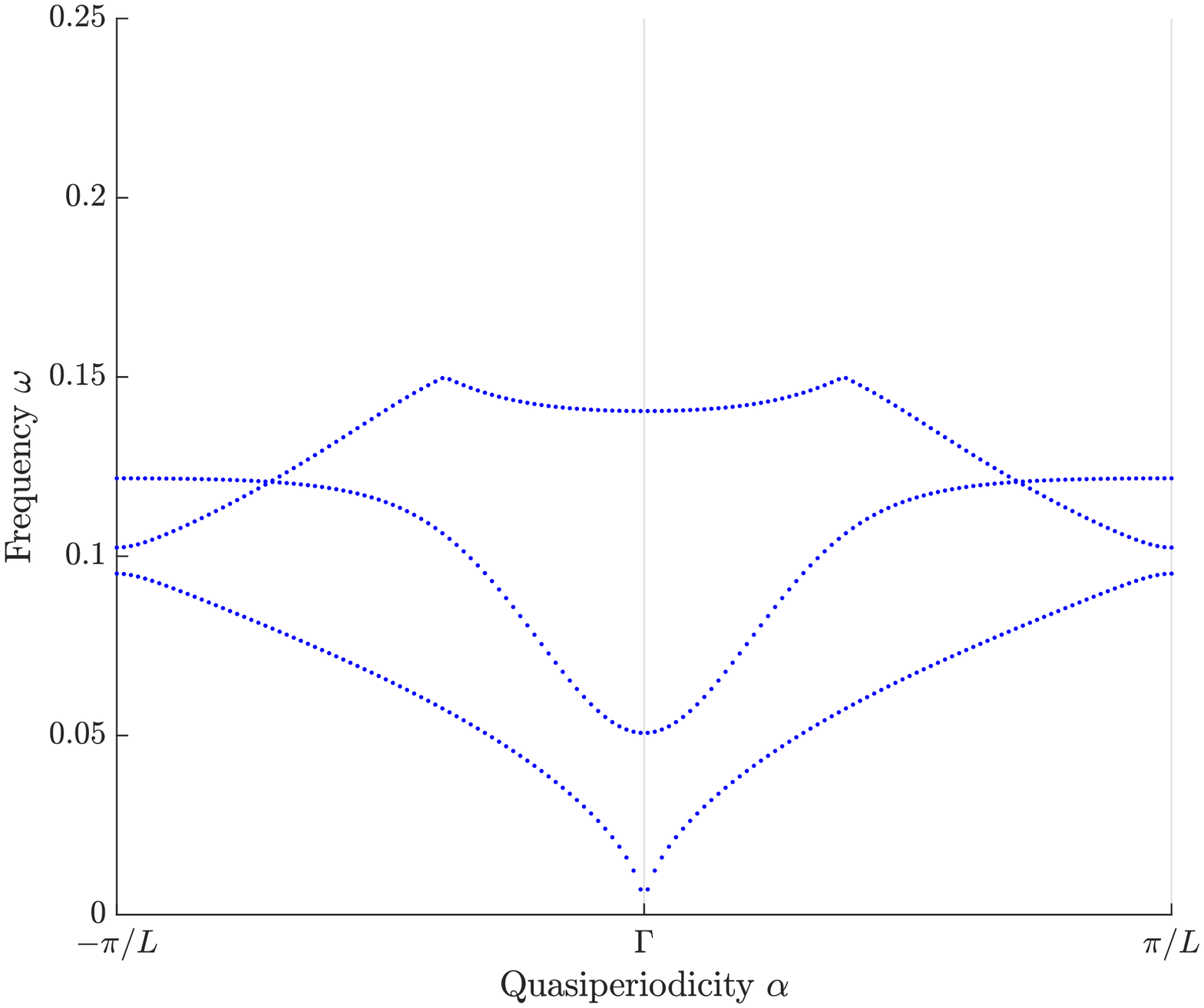}
	\caption{Folded band structure with $\Omega = 0.3$.}
	\end{center}	
	\end{subfigure}
	\caption{ Subwavelength band functions of the one-dimensional lattice of unit cells containing three resonators in the static (i.e., unmodulated) case. In (b) there is a degenerate point around $\alpha = 2.23$.}
	\label{fig:oneDresonators}
	\end{figure}
In Figure \ref{fig:oneDresonators_perturbation4figures}, we demonstrate the subwavelength band functions of \Cref{fig:oneDresonators} as $\varepsilon$ increases from $0$. Note that in the absence of time-modulation, the band functions are symmetric for opposite directions. The time-modulation  opens non-symmetric band gaps at degenerate points, as a consequence of breaking time-reversal symmetry. If the excitation frequency falls inside the band gap for only one propagation direction, wave transmission is prohibited in this direction but not in the opposite one.
\end{example}
\begin{figure}[p]
	\begin{subfigure}[b]{0.49\linewidth}
		\vspace{0pt}
		\begin{center}
	\includegraphics[width=1\linewidth]{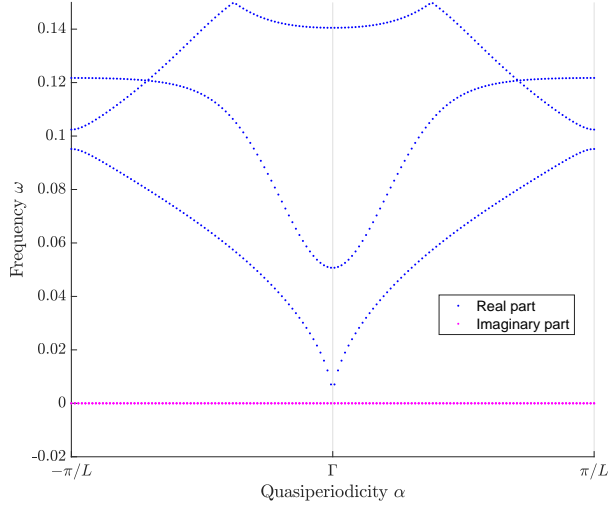}
		\end{center}
		\caption{Static ($\varepsilon=0$) band structure of the one-dimensional lattice of resonators, folded with $\Omega = 0.3$. \newline}
	\end{subfigure}\hfill
	\begin{subfigure}[b]{0.49\linewidth}
		\begin{center}
\includegraphics[width=1\linewidth]{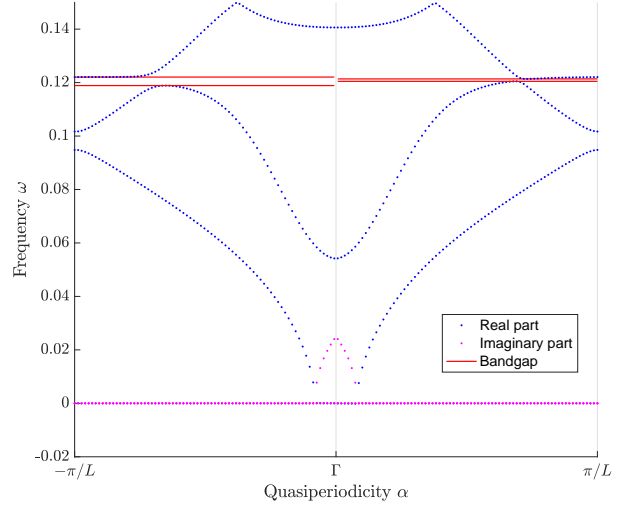}
		\end{center}
		\caption{Time-modulated ($\varepsilon=0.1$) band structure of the one-dimensional lattice of resonators, folded with  $\Omega = 0.3$.\newline}
	\end{subfigure}	

	\vspace{20pt}

	\begin{subfigure}[b]{0.48\linewidth}
	\begin{center}
	\includegraphics[width=1\linewidth]{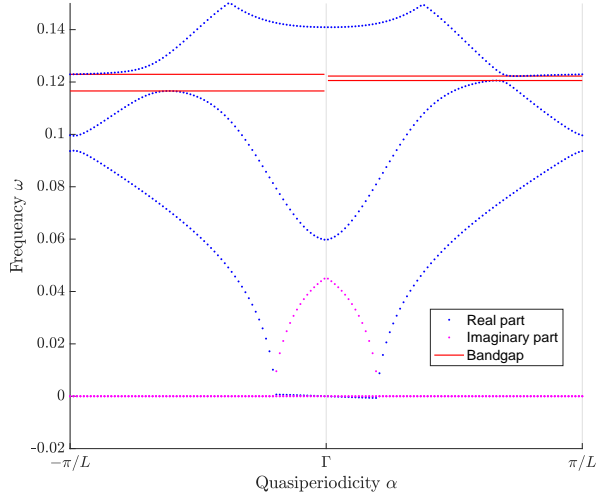}
	\end{center}
	\caption{Time-modulated ($\varepsilon=0.2$) band structure of the one-dimensional lattice of resonators, folded with $\Omega = 0.3$.}	
	\end{subfigure}\hfill
	\begin{subfigure}[b]{0.48\linewidth}
		\begin{center}
     \includegraphics[width=1\linewidth]{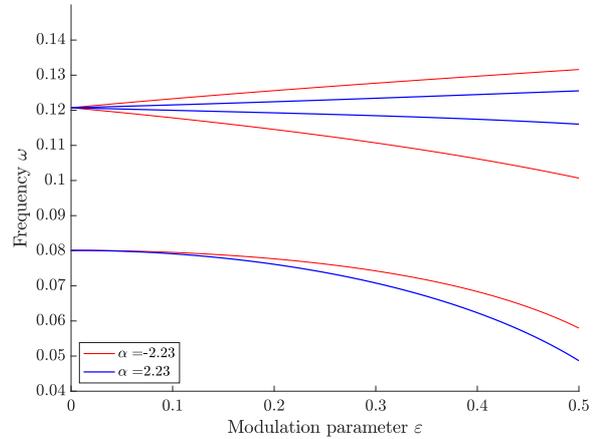}
		\end{center}
		\caption{$\Omega = 0.3$, $\alpha=\pm 2.23$ showing perturbations of degenerate points (depending linearly on $\varepsilon$), and non-degenerate points (depending quadratically on $\varepsilon$).}\label{fig:oneDresonators_perturbation}
	\end{subfigure}
	
	\caption{Band structure of one-dimensional trimers ($N=3$) of subwavelength resonators with time-modulation in $\rho$.  As $\varepsilon$ increases from $0$, the degenerate points open into asymmetric band gaps, indicated by solid red lines in (b) and (c). Waves with frequencies inside the disjunctive union of these band gaps can only propagate in one direction.}
	\label{fig:oneDresonators_perturbation4figures}
\end{figure}
Using formula \eqref{eq:pert}, we now seek to verify the observations made in \Cref{ex}. For the purpose of reciprocity, we would like to determine the values of $p_\alpha:=(F_1^\alpha)_{12}(F_1^\alpha)_{21}$ (where we have made the $\alpha$-dependence of $F_1$ explicit) and verify that $p_\alpha \neq p_{-\alpha}$. Recall the system of $2N$ linear ODEs given in \eqref{eq:1dsys}:
\begin{equation*}
	\frac{\dx y}{\dx t}(t) = \widetilde{A}(t)y(t),
\end{equation*}
with $\widetilde{A}=\widetilde{A}_0+\varepsilon \widetilde{A}_1+O(\varepsilon^2)$, where
\begin{equation*}
	\widetilde{A}_0=\begin{pmatrix}
		0 & \mathrm{Id}\\
		-M^\alpha_0 & 0
	\end{pmatrix}, \qquad
	\widetilde{A}_1=\begin{pmatrix}
		0 & 0\\
		-M^\alpha_1(t) & 0
	\end{pmatrix}.
\end{equation*}
We assume that $\widetilde{A}_0$ is diagonalizable and the matrices $S$ and $S^{-1}$ diagonalize $\widetilde{A}_0$ to $A_0$:
\begin{equation*}
	S\title{A}_0 S^{-1}=\widetilde{A}_0.
\end{equation*}
Then we define $A_1$ in the same way: $A_1=S\widetilde{A}_1S^{-1}$. Let now $\omega_0$ be a $2$-fold degenerate point. By Theorem \ref{order1}, we have
\begin{equation*}
	(F_1)_{12}=(A_1^{(m_1-m_2)})_{12},\qquad (F_1)_{21}=(A_1^{(m_2-m_1)})_{21}.
\end{equation*}
This implies the following result.
\begin{thm}
Let $\alpha\in Y^*$, and let $\omega_0^\alpha$ be a double degenerate point of $\widetilde{A}_{0,\alpha}$ (i.e. with $r=2$). Then the perturbed eigenvalue $\omega^\alpha$ of $\widetilde{A}_\alpha$ satisfies $\omega^\alpha = \omega_0^\alpha \pm \varepsilon r_\alpha + O(\varepsilon^2)$, where $r_\alpha:=\sqrt{p_\alpha}$ and
\begin{equation}
\label{pvaluecompute}
	p_\alpha = e^\top_1S_\alpha\widetilde{A}^{(m_1-m_2)}_{1,\alpha}S_\alpha^{-1} e_2e^\top_2S_\alpha\widetilde{A}_{1,\alpha}^{(m_2-m_1)}S_\alpha^{-1} e_1,
\end{equation}
where $S_\alpha$ diagonalizes $\widetilde{A}_0^\alpha$ and $e_1$ and $e_2$ denote the first and second standard basis vectors in $\mathbb{R}^N$.
\end{thm}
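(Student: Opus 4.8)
The plan is to assemble the theorem from the eigenvalue-perturbation machinery already developed, so that the only genuine computation is transporting the Fourier coefficients of the perturbation through the constant similarity $S_\alpha$ that diagonalizes $\widetilde{A}_{0,\alpha}$. First I would record that, since $\omega_0^\alpha$ is a double degenerate point of the constant matrix $\widetilde{A}_{0,\alpha}$ and, under the natural assumption $A_1^{(0)}=0$ adopted above, the relevant $2\times 2$ upper-left block of $F_1$ is anti-diagonal as in \eqref{pvalue}. The expansion $\omega^\alpha = \omega_0^\alpha \pm \varepsilon\sqrt{p_\alpha} + O(\varepsilon^2)$ with $p_\alpha = (F_1^\alpha)_{12}(F_1^\alpha)_{21}$ is then exactly \eqref{eq:pert}, so the first assertion requires no new argument beyond \Cref{order1}.

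It remains to evaluate $p_\alpha$ explicitly. By \Cref{order1} (equivalently Lemmas \ref{diagonalF1} and \ref{double}), the off-diagonal entries are $(F_1)_{12} = (A_1^{(m_1-m_2)})_{12}$ and $(F_1)_{21} = (A_1^{(m_2-m_1)})_{21}$, where $A_1$ is the $\varepsilon$-linear part of $\widetilde{A}$ expressed in the basis diagonalizing $\widetilde{A}_{0,\alpha}$, and $m_1,m_2$ are the folding numbers of the two colliding eigenvalues. I would then rewrite each matrix entry as a bilinear form in the standard basis, $(A_1^{(m)})_{lk} = e_l^\top A_1^{(m)} e_k$, where $e_1,e_2$ are the standard basis vectors of the $2N$-dimensional state space of \eqref{eq:1dsys} indexing the two degenerate directions. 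Since these are scalars, their product gives $p_\alpha = \bigl(e_1^\top A_1^{(m_1-m_2)} e_2\bigr)\bigl(e_2^\top A_1^{(m_2-m_1)} e_1\bigr)$.

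The key step is the passage from $\widetilde{A}_1$ to $A_1$ at the level of individual Fourier modes. Because $S_\alpha$ diagonalizes the \emph{constant} matrix $\widetilde{A}_{0,\alpha}$, it is itself time-independent, and the defining relation $A_1(t) = S_\alpha \widetilde{A}_1(t) S_\alpha^{-1}$ can be expanded mode by mode: inserting the Fourier series $A_1(t) = \sum_m e^{\iu\Omega m t} A_1^{(m)}$ and $\widetilde{A}_1(t) = \sum_m e^{\iu\Omega m t}\widetilde{A}_1^{(m)}$ and matching coefficients yields $A_1^{(m)} = S_\alpha \widetilde{A}_1^{(m)} S_\alpha^{-1}$ for every $m$. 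Substituting this into the previous display produces precisely \eqref{pvaluecompute}.

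I expect the only real subtlety to lie in bookkeeping rather than in analysis: one must fix the ordering convention so that the degenerate eigenvalue occupies the first two slots (so that $e_1,e_2$ genuinely select the colliding modes and carry the correct folding numbers $m_1,m_2$ entering \eqref{eq417}), and one must keep the similarity/inverse convention for $S_\alpha$ consistent with \eqref{pvaluecompute}. Everything else—invertibility of $S_\alpha$, which follows from diagonalizability of $\widetilde{A}_{0,\alpha}$, and the commutation of the constant conjugation by $S_\alpha$ with the Fourier expansion in $t$—is immediate.
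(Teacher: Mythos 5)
Your proposal is correct and follows essentially the same route as the paper: the expansion comes directly from \eqref{eq:pert} via \Cref{order1}, and the explicit formula \eqref{pvaluecompute} is obtained by conjugating the Fourier modes of $\widetilde{A}_1$ by the time-independent diagonalizing matrix $S_\alpha$ and reading off the $(1,2)$ and $(2,1)$ entries as bilinear forms. Your bookkeeping remarks (ordering the degenerate eigenvalues first, consistency of the similarity convention) are exactly the points the paper handles implicitly.
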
 

	In general, if $N\geq 3$ we have, $r_\alpha\neq r_{-\alpha}$.  Consider a one-dimensional lattice with three resonators in the unit cell, i.e., the same setting as in the above example. 	In this setting, $(M^\alpha_1(t))_{ij}=KC^{\alpha}_{ij}(\text{cos}(\Omega t+\phi_i)-\text{cos}(\Omega t + \phi_j))$. Here, $K$ is some constant independent of $\alpha$ and $t$ and $C^{\alpha}$ is the capacitance matrix. Assume that at $\alpha=\alpha_{\mathrm{deg}}$ we have a double degeneracy with folding numbers $m_1 =0$ and $m_2=1$. The computation of $p_\alpha$ as in \eqref{pvaluecompute} boils down to the $-1$ and the $1$-st Fourier coefficients of $M^\alpha_1(t)$. In \Cref{tab:vals}, we present results obtained for the rates of the first order perturbations depending on $\Omega$ with phase shifts: $\phi_1=0,\phi_2=\pi/2$ and $\phi_3=\pi$, computed in two different ways. The first method makes use of the asymptotic formula \eqref{pvaluecompute} while the second one is an ``exact'' one and is based on the multipole method developed in \cite{ammari2020time}. The reciprocity is broken in this setting.
\label{reci}
	\begin{table}[H]
\begin{subfigure}[b]{0.45\linewidth}
\centering
\begin{tabular}{||c c c c||} 
\hline
 $\Omega$ & $\alpha_{\text{deg}}$ & $r_\alpha$ & $r_{-\alpha}$ \\ [0.5ex] 
\hline
\hline
 $0.2$ & $-2.35$ & $\pm0.0165$ & $\pm 0.0058$ \\ 
\hline
 $0.3$ & $-2.23$ & $\pm0.0271$ & $\pm 0.0075$ \\  
\hline
 $0.4$ & $-0.32$ & $\pm0.0253$ & $\pm 0.0228$ \\ 
\hline
\end{tabular}
\caption{Values computed using the asymptotic formula \eqref{pvaluecompute}.}
\end{subfigure}\hfill
		\begin{subfigure}[b]{0.45\linewidth}
\centering
\begin{tabular}{||c c c c||} 
\hline
 $\Omega$ & $\alpha_{\text{deg}}$ & $r_\alpha$ & $r_{-\alpha}$ \\ [0.5ex] 
\hline
\hline
 $0.2$ & $-2.35$ & $\pm0.0165$ & $\pm 0.0058$ \\ 
\hline
 $0.3$ & $-2.23$ & $\pm0.0270$ & $\pm 0.0074$ \\  
\hline
 $0.4$ & $-0.32$ & $\pm0.0252$ & $\pm 0.0227$ \\  
\hline
\end{tabular}	
\caption{Values computed using the multipole discretization method.}
\end{subfigure}
\caption{Comparison between the first-order rates $r_\alpha$ of the eigenvalue perturbation, computed using the asymptotic formula (a) and using the multipole discretisation method (b). Here, we simulate the same system of subwavelength resonators as in Figure \ref{fig:oneDresonators_perturbation}.} \label{tab:vals}
\end{table}

%

\begin{rmk}
	In \cite{TheaThesis}, closed-form formulas for the elements of the matrices $F_1$ and $F_2$ are derived. Together with the eigenvalue perturbation theory in Appendix \ref{app:pert}, this allows us to compute higher-order asymptotic expansions of the quasifrequencies. In particular, we emphasize that the perturbation will generically scale as $O(\varepsilon^2)$. Nevertheless, at the degenerate points (which are the starting points for asymmetric band gap opening) the perturbation scales as $O(\varepsilon)$. The different behaviour between degenerate and non-degenerate points is also apparent in \Cref{fig:oneDresonators_perturbation}.
\end{rmk}

\section{Non-reciprocal transmission in other structures}
In the previous sections, we have explained the fundamental reasons for the broken reciprocity and analysed the perturbation of the Floquet exponents asymptotically. In this section, we provide numerical examples of other structures with broken reciprocity. The following examples originate from those considered in \cite{ammari2020time}. 

\subsection{Square lattice}
We begin by considering resonators in a 2-dimensional square lattice defined through the lattice vectors
		\begin{equation}l_1 = \begin{pmatrix}1\\0\end{pmatrix}, \quad l_2 = \begin{pmatrix}0\\1\end{pmatrix}.\end{equation}
		The lattice and the corresponding Brillouin zone are illustrated in \Cref{fig:square_uni}. The symmetry points in $Y^*$ are given by $\Gamma = (0,0), \ \text{M} = (\pi,\pi)$ and $\text{X}=(\pi,0)$.
\begin{figure}[H]
	\begin{subfigure}[b]{0.5\linewidth}
				\centering
				\begin{tikzpicture}[scale=1.2]
					\begin{scope}[xshift=-5cm,scale=1]
						\pgfmathsetmacro{\r}{0.1pt} 
						\coordinate (a) at (1,0);		
						\coordinate (b) at (0,1);
						
						\draw[opacity=0.2] (0,0) -- (1,0);
						\draw[opacity=0.2] (0,0) -- (0,1);
						\draw[fill=lightgray] (0.76,0.65) circle(\r); 
						\draw[fill=lightgray] (0.24,0.65) circle(\r);
						\draw[fill=lightgray] (0.5,0.2) circle(\r);
						
						\begin{scope}[shift = (a)]					
							\draw[opacity=0.2] (1,0) -- (1,1);
							\draw[opacity=0.2] (0,0) -- (1,0);
							\draw[opacity=0.2] (0,0) -- (0,1);
						\draw[fill=lightgray] (0.76,0.65) circle(\r); 
						\draw[fill=lightgray] (0.24,0.65) circle(\r);
						\draw[fill=lightgray] (0.5,0.2) circle(\r);
						\end{scope}
						\begin{scope}[shift = (b)]
							\draw[opacity=0.2] (1,1) -- (0,1);
							\draw[opacity=0.2] (0,0) -- (1,0);
							\draw[opacity=0.2] (0,0) -- (0,1);
						\draw[fill=lightgray] (0.76,0.65) circle(\r); 
						\draw[fill=lightgray] (0.24,0.65) circle(\r);
						\draw[fill=lightgray] (0.5,0.2) circle(\r);
						\end{scope}
						\begin{scope}[shift = ($-1*(a)$)]
							\draw[opacity=0.2] (0,0) -- (1,0);
							\draw[opacity=0.2] (0,0) -- (0,1);
						\draw[fill=lightgray] (0.76,0.65) circle(\r); 
						\draw[fill=lightgray] (0.24,0.65) circle(\r);
						\draw[fill=lightgray] (0.5,0.2) circle(\r);
						\end{scope}
						\begin{scope}[shift = ($-1*(b)$)]
							\draw[opacity=0.2] (0,0) -- (1,0);
							\draw[opacity=0.2] (0,0) -- (0,1);
						\draw[fill=lightgray] (0.76,0.65) circle(\r); 
						\draw[fill=lightgray] (0.24,0.65) circle(\r);
						\draw[fill=lightgray] (0.5,0.2) circle(\r);
						\end{scope}
						\begin{scope}[shift = ($(a)+(b)$)]
							\draw[opacity=0.2] (1,0) -- (1,1) -- (0,1);
							\draw[opacity=0.2] (0,0) -- (1,0);
							\draw[opacity=0.2] (0,0) -- (0,1);
						\draw[fill=lightgray] (0.76,0.65) circle(\r); 
						\draw[fill=lightgray] (0.24,0.65) circle(\r);
						\draw[fill=lightgray] (0.5,0.2) circle(\r);
						\end{scope}
						\begin{scope}[shift = ($-1*(a)-(b)$)]
							\draw[opacity=0.2] (0,0) -- (1,0);
							\draw[opacity=0.2] (0,0) -- (0,1);
						\draw[fill=lightgray] (0.76,0.65) circle(\r); 
						\draw[fill=lightgray] (0.24,0.65) circle(\r);
						\draw[fill=lightgray] (0.5,0.2) circle(\r);
						\end{scope}
						\begin{scope}[shift = ($(a)-(b)$)]
							\draw[opacity=0.2] (1,0) -- (1,1);
							\draw[opacity=0.2] (0,0) -- (1,0);
							\draw[opacity=0.2] (0,0) -- (0,1);
						\draw[fill=lightgray] (0.76,0.65) circle(\r); 
						\draw[fill=lightgray] (0.24,0.65) circle(\r);
						\draw[fill=lightgray] (0.5,0.2) circle(\r);
						\end{scope}
						\begin{scope}[shift = ($-1*(a)+(b)$)]
							\draw[opacity=0.2] (1,1) -- (0,1);
							\draw[opacity=0.2] (0,0) -- (1,0);
							\draw[opacity=0.2] (0,0) -- (0,1);
						\draw[fill=lightgray] (0.76,0.65) circle(\r); 
						\draw[fill=lightgray] (0.24,0.65) circle(\r);
						\draw[fill=lightgray] (0.5,0.2) circle(\r);
						\end{scope}
						\begin{scope}[shift = ($2*(a)$)]
							\draw (0.5,0.5) node[rotate=0]{$\cdots$};
						\end{scope}
						\begin{scope}[shift = ($-2*(a)$)]
							\draw (0.5,0.5) node[rotate=0]{$\cdots$};
						\end{scope}
						\begin{scope}[shift = ($2*(b)$)]
							\draw (0.5,0.3) node[rotate=90]{$\cdots$};
						\end{scope}
						\begin{scope}[shift = ($-2*(b)$)]
							\draw (0.5,0.7) node[rotate=90]{$\cdots$};
						\end{scope}
					\end{scope}
				\end{tikzpicture}
				\caption{Circular resonators in square lattice.}
			\end{subfigure}
			\begin{subfigure}[b]{0.5\linewidth}
				\centering
				\begin{tikzpicture}[scale=3]	
					\coordinate (a) at ({1/sqrt(3)},1);	
					\coordinate (b) at ({1/sqrt(3)},-1);
					\coordinate (c) at ({2/sqrt(3)},0);
					\coordinate (M) at (0.5,0.5);
					\coordinate (M2) at (-0.5,0.5);
					\coordinate (M3) at (-0.5,-0.5);
					\coordinate (M4) at (0.5,-0.5);
					\coordinate (X) at (0.5,0);
					\coordinate (X2) at (-0.5,0);

					\draw[->,opacity=0.8] (0,0) -- (0.8,0);
					\draw[->,opacity=0.8] (0,0) -- (0,0.8);
					\draw[fill] (M) circle(1pt) node[yshift=8pt, xshift=-2pt]{M}; 	\draw[fill] (M3) circle(1pt) node[left]{$-$M}; 

					\draw[fill] (0,0) circle(1pt) node[left]{$\Gamma$}; 
					\draw[fill] (X) circle(1pt) node[below right]{X}; 
					\draw[fill] (X2) circle(1pt) node[left]{$-$X}; 

					\draw[thick, postaction={decorate}, decoration={markings, mark=at position 0.1 with {\arrow{>}}, markings, mark=at position 0.3 with {\arrow{>}}, markings,  mark=at position 0.4 with {\arrow{>}}, markings, mark=at position 0.6 with {\arrow{>}}, markings, mark=at position 0.7 with {\arrow{>}}, markings, mark=at position 0.9 with {\arrow{>}}}, color=red]
					(0,0) -- (M) -- (X) -- (0,0) -- (X2) -- (M3) -- (0,0);
					\draw[opacity=0.8] (M) -- (M2) -- (M3) -- (M4) -- cycle; 
				\end{tikzpicture}
				\vspace{15pt}
				\caption{Brillouin zone and the symmetry points $\Gamma$, $\mathrm{X}$ and $\mathrm{M}$.}
			\end{subfigure}
			\caption{Illustration of the square lattice and the corresponding Brillouin zone. The red path shows the points where the band functions are computed.} \label{fig:square_uni}
\end{figure}
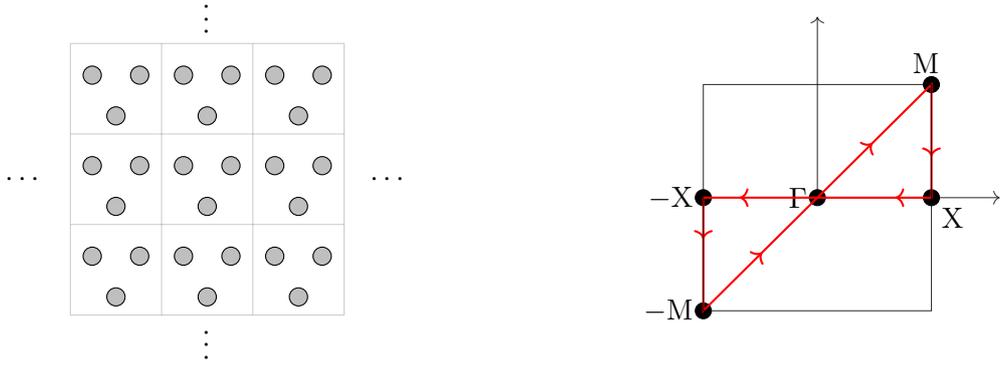
In Figure \ref{fig5}, we compute the band structure with modulation frequency $\Omega = 0.2$.

\begin{figure}[H]
	\begin{subfigure}[b]{0.4\linewidth}
		\vspace{0pt}
		\begin{center}
			\includegraphics[width=1\linewidth]{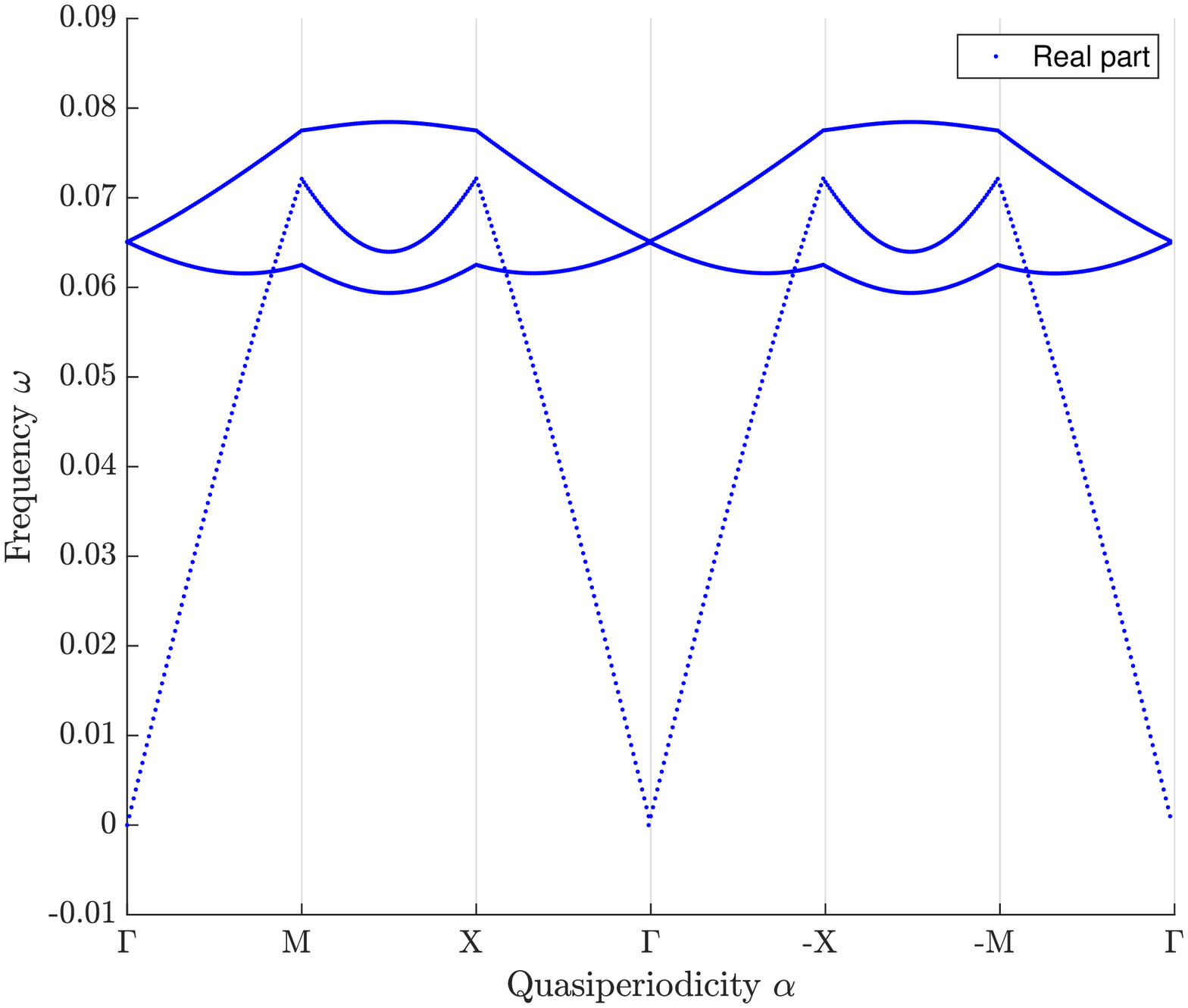}
		\end{center}
		\caption{ \centering
			Static case ($\varepsilon=0$).}
	\end{subfigure}
	\hspace{10pt}
	\begin{subfigure}[b]{0.4\linewidth}
			\vspace{0pt}
		\begin{center}
			\includegraphics[width=1\linewidth]{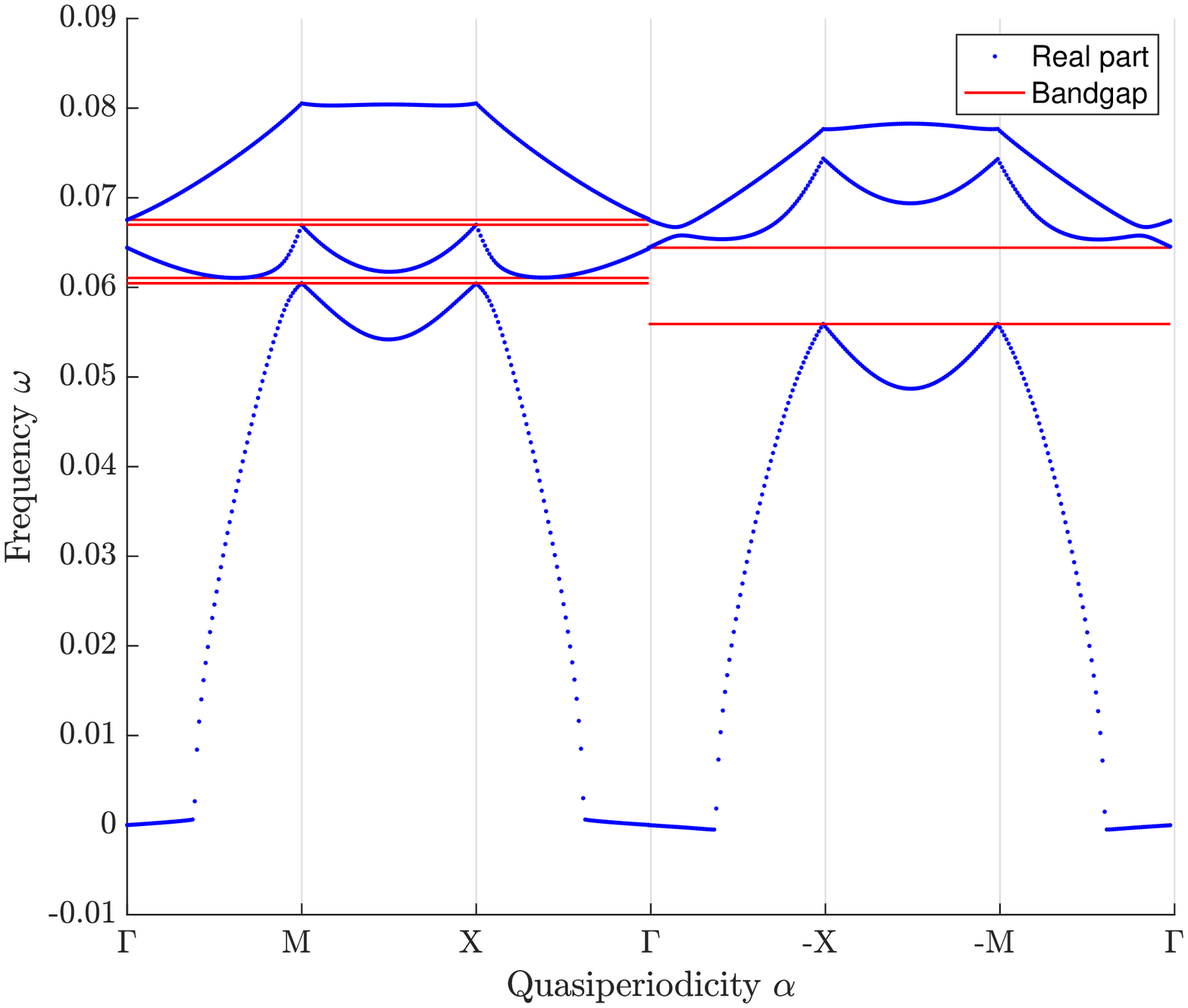}
		\end{center}
		\caption{\centering Modulated case with $\varepsilon=0.25$.}
	\end{subfigure}
	\centering
	\begin{subfigure}[b]{0.4\linewidth}
			\vspace{0pt}		
		\begin{center}
			\includegraphics[width=1\linewidth]{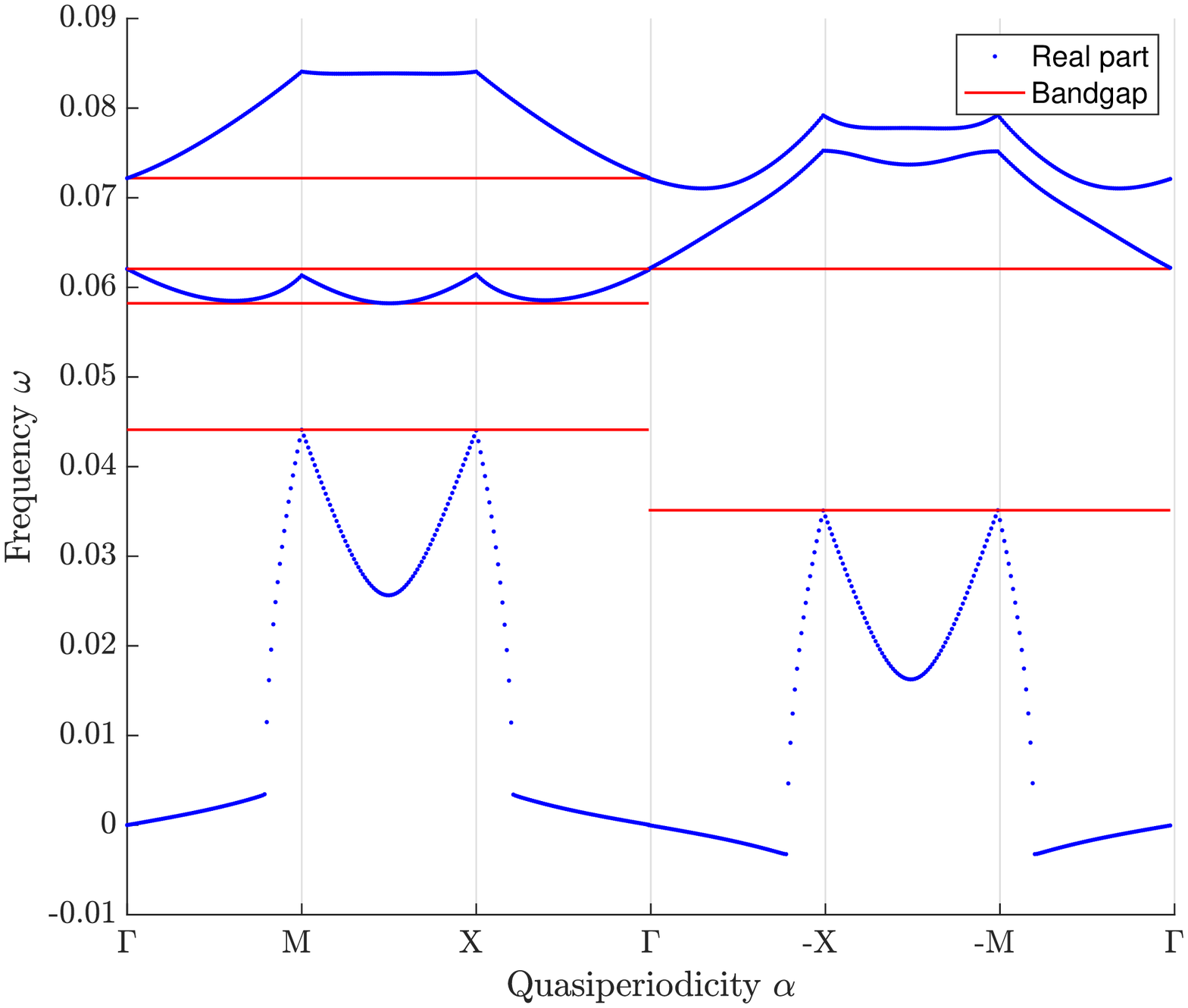}
		\end{center}
		\caption{\centering Modulated case with $\varepsilon=0.5$.}
	\end{subfigure}
	\hspace{10pt}
	\caption{Band structure of square lattice with three subwavelength resonators with modulation frequency $\Omega = 0.2$.} \label{fig5}
\end{figure}
\subsection{Honeycomb lattice}
First, we consider a honeycomb lattice of resonator trimers as illustrated in Figure \ref{fig:honeycomb}, where the unit cell now contains six resonators $D_i$ respectively centred at $c_i$, $i=1,..,6$:
\begin{align*}
	c_1 &= (1,0) + 3R(1,0), \quad c_2 = (1,0) + 3R\left(\cos\left(\tfrac{2\pi}{3}\right),\sin\left(\tfrac{2\pi}{3}\right)\right),   &c_3 = (1,0) + 3R\left(\cos\left(\tfrac{4\pi}{3}\right),\sin\left(\tfrac{4\pi}{3}\right)\right),\\[0.5em]
	c_4 &= (2,0) + 3R\left(\cos\left(\tfrac{\pi}{3}\right),\sin\left(\tfrac{\pi}{3}\right)\right), \qquad c_5 = (2,0) - 3R(1,0), 	 &c_6 = (2,0) + 3R\left(\cos\left(\tfrac{5\pi}{3}\right),\sin\left(\tfrac{5\pi}{3}\right)\right).
\end{align*}
We use the modulation given by $\kappa_i(t) = 1, \ i=1,\ldots,6$ and 
\begin{equation*}\rho_1(t) = \rho_4(t) = \frac{1}{1 + \varepsilon\cos(\Omega t)}, \quad \rho_2(t) = \rho_5(t) = \frac{1}{1 + \varepsilon\cos\left(\Omega t + \frac{2\pi}{3}\right)}, \quad \rho_3(t) = \rho_6(t) = \frac{1}{1 + \varepsilon\cos\left(\Omega t + \frac{4\pi}{3}\right)},\end{equation*}
for $0 \leq \varepsilon < 1$.

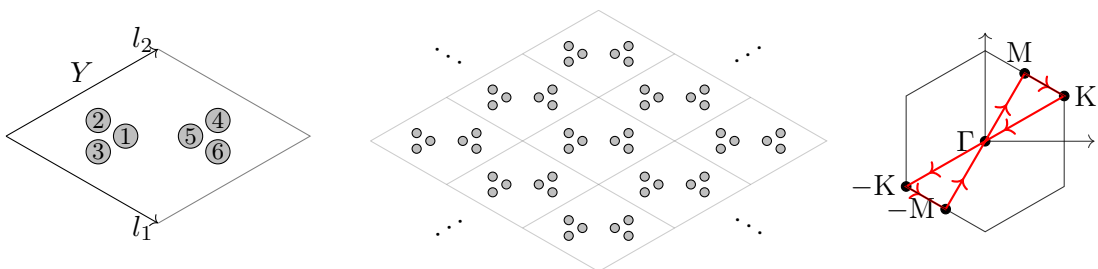
\begin{figure}[H]
\label{honeycomb}
	\begin{subfigure}[b]{0.33\linewidth}
		\centering
		\begin{tikzpicture}[scale=2]
				\pgfmathsetmacro{\r}{0.08pt}
				\pgfmathsetmacro{\rt}{0.12pt}
				\coordinate (a) at (1,{1/sqrt(3)});		
				\coordinate (b) at (1,{-1/sqrt(3)});	
				\coordinate (c) at (2,0);
				\coordinate (x1) at ({2/3},0);
				\coordinate (x2) at ({4/3},0);
				\draw[->] (0,0) -- (a) node[pos=0.9,xshift=0,yshift=7]{ $l_2$} node[pos=0.5,above]{$Y$};
				\draw[->] (0,0) -- (b) node[pos=0.9,xshift=0,yshift=-5]{ $l_1$};
				\draw[opacity=0.5] (a) -- (c) -- (b);
				\draw[fill=lightgray] (x1){} +(0:\rt) circle(\r) node{\footnotesize  1};
				\draw[fill=lightgray] (x1){} +(120:\rt) circle(\r)node{\footnotesize  2};
				\draw[fill=lightgray] (x1){} +(240:\rt) circle(\r) node{\footnotesize  3};
				\draw[fill=lightgray] (x2){} +(60:\rt) circle(\r) node{\footnotesize  4};
				\draw[fill=lightgray] (x2){} +(180:\rt) circle(\r) node{\footnotesize  5};
				\draw[fill=lightgray] (x2){} +(300:\rt) circle(\r) node{\footnotesize  6};
		\end{tikzpicture}
		\vspace{0.3cm}
		\caption{Hexagonal lattice unit cell $Y$ containing $6$ resonators.}
	\end{subfigure}
	\begin{subfigure}[b]{0.33\linewidth}
		\begin{tikzpicture}[scale=1]
			\begin{scope}[xshift=-5cm,scale=1]
				\pgfmathsetmacro{\r}{0.06pt}
				\pgfmathsetmacro{\rt}{0.12pt}
				\coordinate (a) at (1,{1/sqrt(3)});		
				\coordinate (b) at (1,{-1/sqrt(3)});
				
				\draw[opacity=0.2] (0,0) -- (a);
				\draw[opacity=0.2] (0,0) -- (b);
				\draw[fill=lightgray] ({2/3},0){} +(0:\rt) circle(\r);
				\draw[fill=lightgray] ({2/3},0){} +(120:\rt) circle(\r);
				\draw[fill=lightgray] ({2/3},0){} +(240:\rt) circle(\r);
				\draw[fill=lightgray] ({4/3},0){} +(60:\rt) circle(\r);
				\draw[fill=lightgray] ({4/3},0){} +(180:\rt) circle(\r);
				\draw[fill=lightgray] ({4/3},0){} +(300:\rt) circle(\r);
				
				\begin{scope}[shift = (a)]					
					\draw[opacity=0.2] (0,0) -- (1,{1/sqrt(3)});
					\draw[opacity=0.2] (0,0) -- (1,{-1/sqrt(3)});
					\draw[opacity=0.2] (1,{1/sqrt(3)}) -- (2,0);
					\draw[fill=lightgray] ({2/3},0){} +(0:\rt) circle(\r);
					\draw[fill=lightgray] ({2/3},0){} +(120:\rt) circle(\r);
					\draw[fill=lightgray] ({2/3},0){} +(240:\rt) circle(\r);
					\draw[fill=lightgray] ({4/3},0){} +(60:\rt) circle(\r);
					\draw[fill=lightgray] ({4/3},0){} +(180:\rt) circle(\r);
					\draw[fill=lightgray] ({4/3},0){} +(300:\rt) circle(\r);
				\end{scope}
				\begin{scope}[shift = (b)]
					
					\draw[opacity=0.2] (0,0) -- (1,{1/sqrt(3)});
					\draw[opacity=0.2] (0,0) -- (1,{-1/sqrt(3)});
					\draw[opacity=0.2] (2,0) -- (1,{-1/sqrt(3)});
					\draw[fill=lightgray] ({2/3},0){} +(0:\rt) circle(\r);
					\draw[fill=lightgray] ({2/3},0){} +(120:\rt) circle(\r);
					\draw[fill=lightgray] ({2/3},0){} +(240:\rt) circle(\r);
					\draw[fill=lightgray] ({4/3},0){} +(60:\rt) circle(\r);
					\draw[fill=lightgray] ({4/3},0){} +(180:\rt) circle(\r);
					\draw[fill=lightgray] ({4/3},0){} +(300:\rt) circle(\r);
				\end{scope}
				\begin{scope}[shift = ($-1*(a)$)]
					
					\draw[opacity=0.2] (0,0) -- (1,{1/sqrt(3)});
					\draw[opacity=0.2] (0,0) -- (1,{-1/sqrt(3)});
					\draw[fill=lightgray] ({2/3},0){} +(0:\rt) circle(\r);
					\draw[fill=lightgray] ({2/3},0){} +(120:\rt) circle(\r);
					\draw[fill=lightgray] ({2/3},0){} +(240:\rt) circle(\r);
					\draw[fill=lightgray] ({4/3},0){} +(60:\rt) circle(\r);
					\draw[fill=lightgray] ({4/3},0){} +(180:\rt) circle(\r);
					\draw[fill=lightgray] ({4/3},0){} +(300:\rt) circle(\r);
				\end{scope}
				\begin{scope}[shift = ($-1*(b)$)]
					
					\draw[opacity=0.2] (0,0) -- (1,{1/sqrt(3)});
					\draw[opacity=0.2] (0,0) -- (1,{-1/sqrt(3)});
					\draw[fill=lightgray] ({2/3},0){} +(0:\rt) circle(\r);
					\draw[fill=lightgray] ({2/3},0){} +(120:\rt) circle(\r);
					\draw[fill=lightgray] ({2/3},0){} +(240:\rt) circle(\r);
					\draw[fill=lightgray] ({4/3},0){} +(60:\rt) circle(\r);
					\draw[fill=lightgray] ({4/3},0){} +(180:\rt) circle(\r);
					\draw[fill=lightgray] ({4/3},0){} +(300:\rt) circle(\r);
				\end{scope}
				\begin{scope}[shift = ($(a)+(b)$)]
					
					\draw[opacity=0.2] (0,0) -- (1,{1/sqrt(3)});
					\draw[opacity=0.2] (0,0) -- (1,{-1/sqrt(3)});
					\draw[opacity=0.2] (1,{1/sqrt(3)}) -- (2,0) -- (1,{-1/sqrt(3)});
					\draw[fill=lightgray] ({2/3},0){} +(0:\rt) circle(\r);
					\draw[fill=lightgray] ({2/3},0){} +(120:\rt) circle(\r);
					\draw[fill=lightgray] ({2/3},0){} +(240:\rt) circle(\r);
					\draw[fill=lightgray] ({4/3},0){} +(60:\rt) circle(\r);
					\draw[fill=lightgray] ({4/3},0){} +(180:\rt) circle(\r);
					\draw[fill=lightgray] ({4/3},0){} +(300:\rt) circle(\r);
				\end{scope}
				\begin{scope}[shift = ($-1*(a)-(b)$)]
					
					\draw[opacity=0.2] (0,0) -- (1,{1/sqrt(3)});
					\draw[opacity=0.2] (0,0) -- (1,{-1/sqrt(3)});
					\draw[fill=lightgray] ({2/3},0){} +(0:\rt) circle(\r);
					\draw[fill=lightgray] ({2/3},0){} +(120:\rt) circle(\r);
					\draw[fill=lightgray] ({2/3},0){} +(240:\rt) circle(\r);
					\draw[fill=lightgray] ({4/3},0){} +(60:\rt) circle(\r);
					\draw[fill=lightgray] ({4/3},0){} +(180:\rt) circle(\r);
					\draw[fill=lightgray] ({4/3},0){} +(300:\rt) circle(\r);
				\end{scope}
				\begin{scope}[shift = ($(a)-(b)$)]
					
					\draw[opacity=0.2] (0,0) -- (1,{1/sqrt(3)});
					\draw[opacity=0.2] (0,0) -- (1,{-1/sqrt(3)});
					\draw[opacity=0.2] (1,{1/sqrt(3)}) -- (2,0);
					\draw[fill=lightgray] ({2/3},0){} +(0:\rt) circle(\r);
					\draw[fill=lightgray] ({2/3},0){} +(120:\rt) circle(\r);
					\draw[fill=lightgray] ({2/3},0){} +(240:\rt) circle(\r);
					\draw[fill=lightgray] ({4/3},0){} +(60:\rt) circle(\r);
					\draw[fill=lightgray] ({4/3},0){} +(180:\rt) circle(\r);
					\draw[fill=lightgray] ({4/3},0){} +(300:\rt) circle(\r);
				\end{scope}
				\begin{scope}[shift = ($-1*(a)+(b)$)]					
					\draw[opacity=0.2] (0,0) -- (1,{1/sqrt(3)});
					\draw[opacity=0.2] (0,0) -- (1,{-1/sqrt(3)});
					\draw[opacity=0.2] (2,0) -- (1,{-1/sqrt(3)});
					\draw[fill=lightgray] ({2/3},0){} +(0:\rt) circle(\r);
					\draw[fill=lightgray] ({2/3},0){} +(120:\rt) circle(\r);
					\draw[fill=lightgray] ({2/3},0){} +(240:\rt) circle(\r);
					\draw[fill=lightgray] ({4/3},0){} +(60:\rt) circle(\r);
					\draw[fill=lightgray] ({4/3},0){} +(180:\rt) circle(\r);
					\draw[fill=lightgray] ({4/3},0){} +(300:\rt) circle(\r);
				\end{scope}
				\begin{scope}[shift = ($2*(a)$)]
					\draw (1,0) node[rotate=30]{$\cdots$};
				\end{scope}
				\begin{scope}[shift = ($-2*(a)$)]
					\draw (1,0) node[rotate=210]{$\cdots$};
				\end{scope}
				\begin{scope}[shift = ($2*(b)$)]
					\draw (1,0) node[rotate=-30]{$\cdots$};
				\end{scope}
				\begin{scope}[shift = ($-2*(b)$)]
					\draw (1,0) node[rotate=150]{$\cdots$};
				\end{scope}
			\end{scope}
		\end{tikzpicture}
		
		\caption{Periodic system with trimers in a honeycomb lattice.}
	\end{subfigure}
	\begin{subfigure}[b]{0.33\linewidth}
		\centering
			\begin{tikzpicture}[scale=1.8]	
				\coordinate (a) at ({1/sqrt(3)},1);		
				\coordinate (b) at ({1/sqrt(3)},-1);
				\coordinate (c) at ({2/sqrt(3)},0);
				\coordinate (M) at ({0.5/sqrt(3)},0.5);
				\coordinate (M2) at ({-0.5/sqrt(3)},-0.5);
				\coordinate (Km) at ({-1/sqrt(3)},{-1/3});
				\coordinate (K1) at ({1/sqrt(3)},{1/3});
				\coordinate (K2) at ({1/sqrt(3)},{-1/3});
				\coordinate (K3) at (0,{-2/3});
				\coordinate (K4) at ({-1/sqrt(3)},{-1/3});
				\coordinate (K5) at ({-1/sqrt(3)},{1/3});
				\coordinate (K6) at (0,{2/3});
				
				\draw[->,opacity=0.8] (0,0) -- (0.8,0);
				\draw[->,opacity=0.8] (0,0) -- (0,0.8);
				\draw[fill] (M) circle(1pt) node[yshift=8pt, xshift=-2pt]{M}; 
				\draw[fill] (0,0) circle(1pt) node[left]{$\Gamma$}; 
				\draw[fill] (K1) circle(1pt) node[right]{K};

				\draw[fill] (M2) circle(1pt) node[left]{$-$M};
				\draw[fill] (Km) circle(1pt) node[left]{$-$K};

				\draw[thick,postaction={decorate}, decoration={markings, 
				 mark=at position 0.1 with {\arrow{>}}, markings,mark=at position 0.25 with {\arrow{>}},markings, mark=at position 0.45 with {\arrow{>}},markings,  mark=at position 0.65 with {\arrow{>}},markings, mark=at position 0.75 with {\arrow{>}}, markings, mark=at position 0.9 with {\arrow{>}}}, color=red]
				(0,0) -- (M) -- (K1) -- (0,0) -- (Km) -- (M2) -- (0,0);
	
				\draw[opacity=0.8] (K1) -- (K2) -- (K3) -- (K4) -- (K5) -- (K6) -- cycle; 
			\end{tikzpicture}
		\vspace{15pt}		
		\caption{Brillouin zone and the symmetry points $\Gamma$, $\mathrm{K}$ and $\mathrm{M}$.}
	\end{subfigure}
	\caption{Illustration of the honeycomb lattice and corresponding Brillouin zone. The red path shows the points where the band functions are computed.} \label{fig:honeycomb}
\end{figure}

\begin{figure}[tbh]
	\begin{subfigure}[b]{0.45\linewidth}
		\vspace{0pt}
		\begin{center}
		\includegraphics[width=1\linewidth]{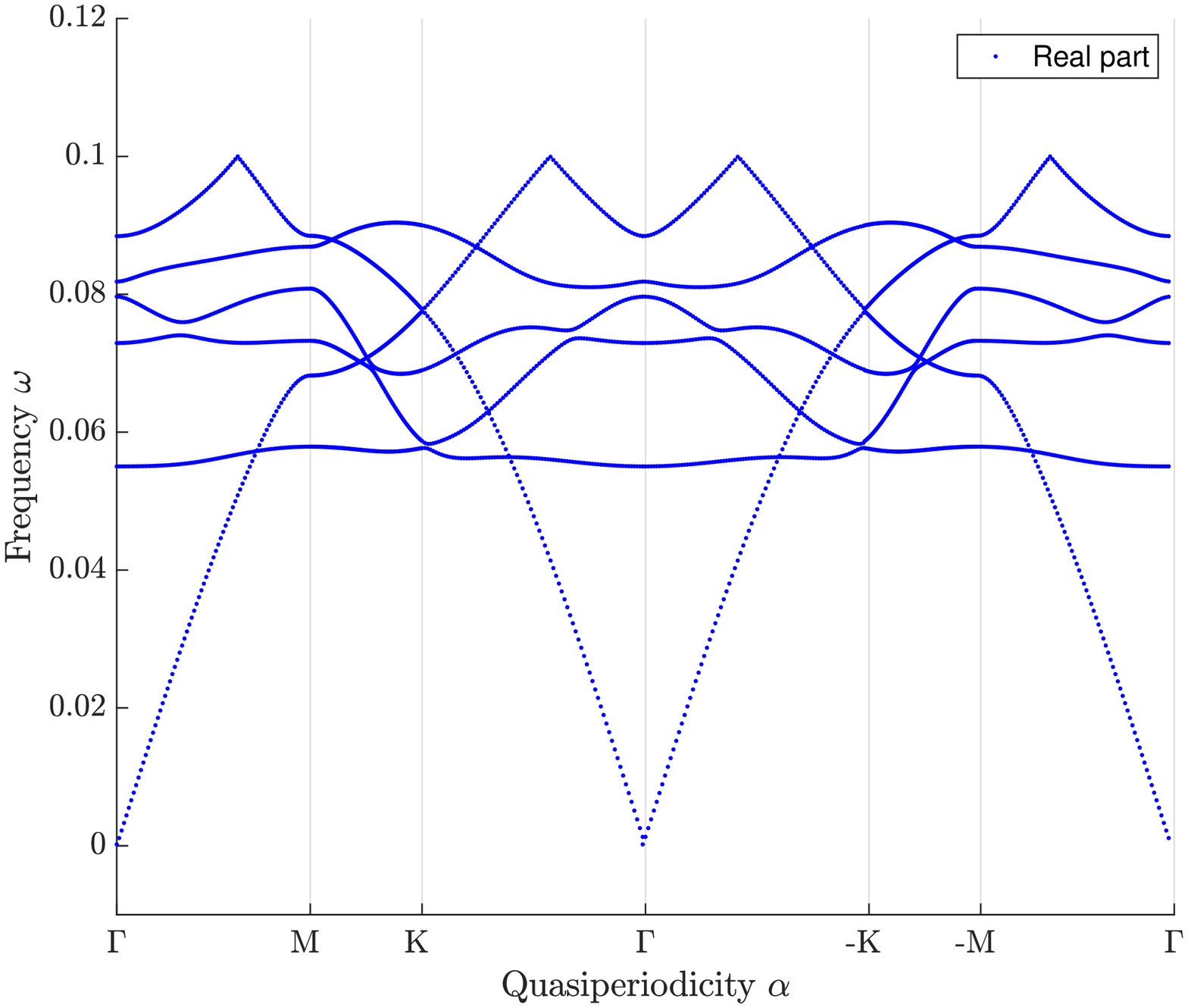}
		\end{center}
		\caption{ \centering
			Static case ($\varepsilon=0$).}
		\label{fig:HL_0}
	\end{subfigure}
	\hspace{10pt}
	\begin{subfigure}[b]{0.45\linewidth}
			\vspace{0pt}
		\begin{center}
\includegraphics[width=1\linewidth]{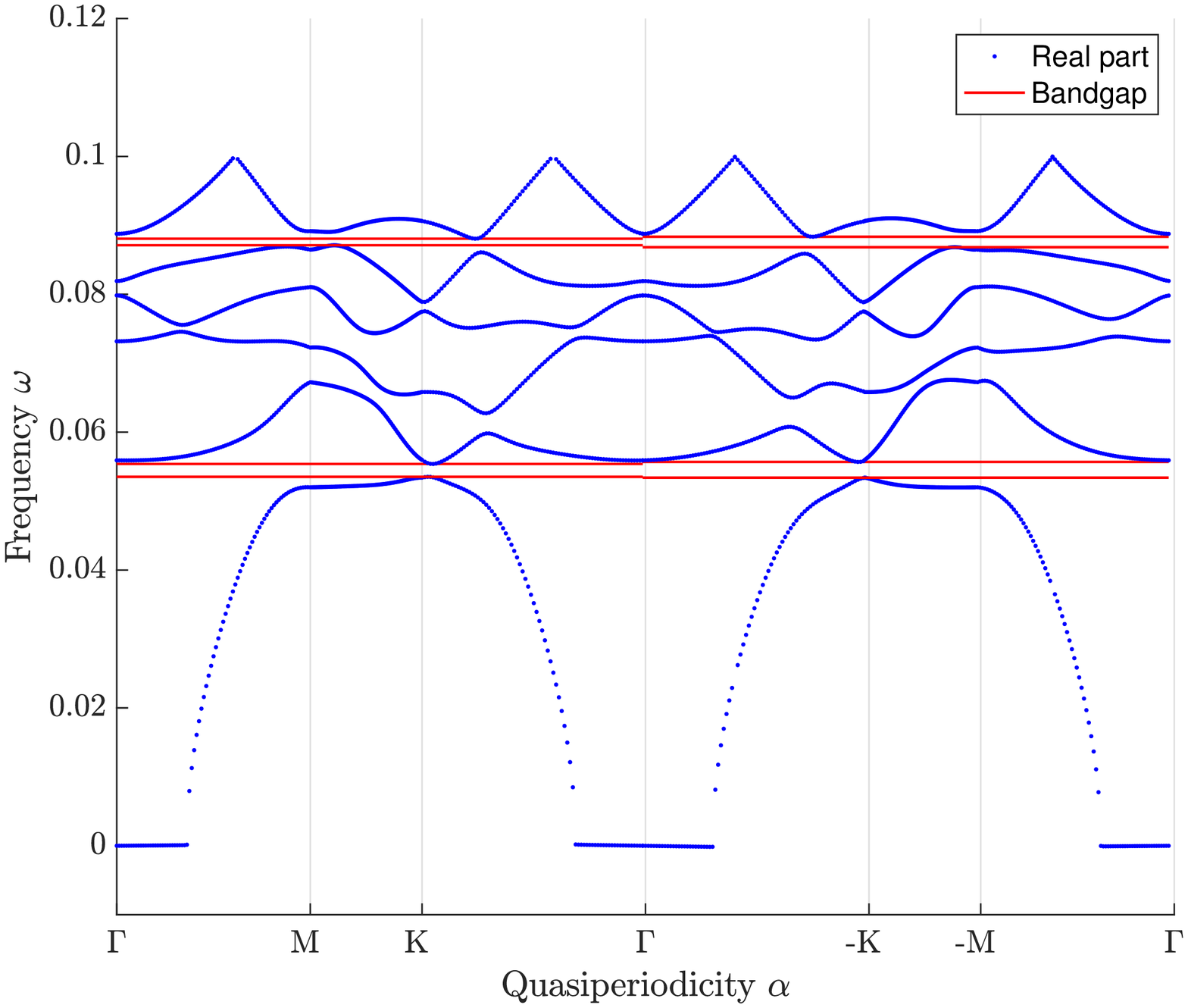}
		\end{center}
		\caption{\centering Modulated with $\varepsilon=0.25$.}
		\label{fig:HL_025}

	\end{subfigure}
	\centering
	\begin{subfigure}[b]{0.45\linewidth}
			\vspace{0pt}
		\begin{center}
 \includegraphics[width=1\linewidth]{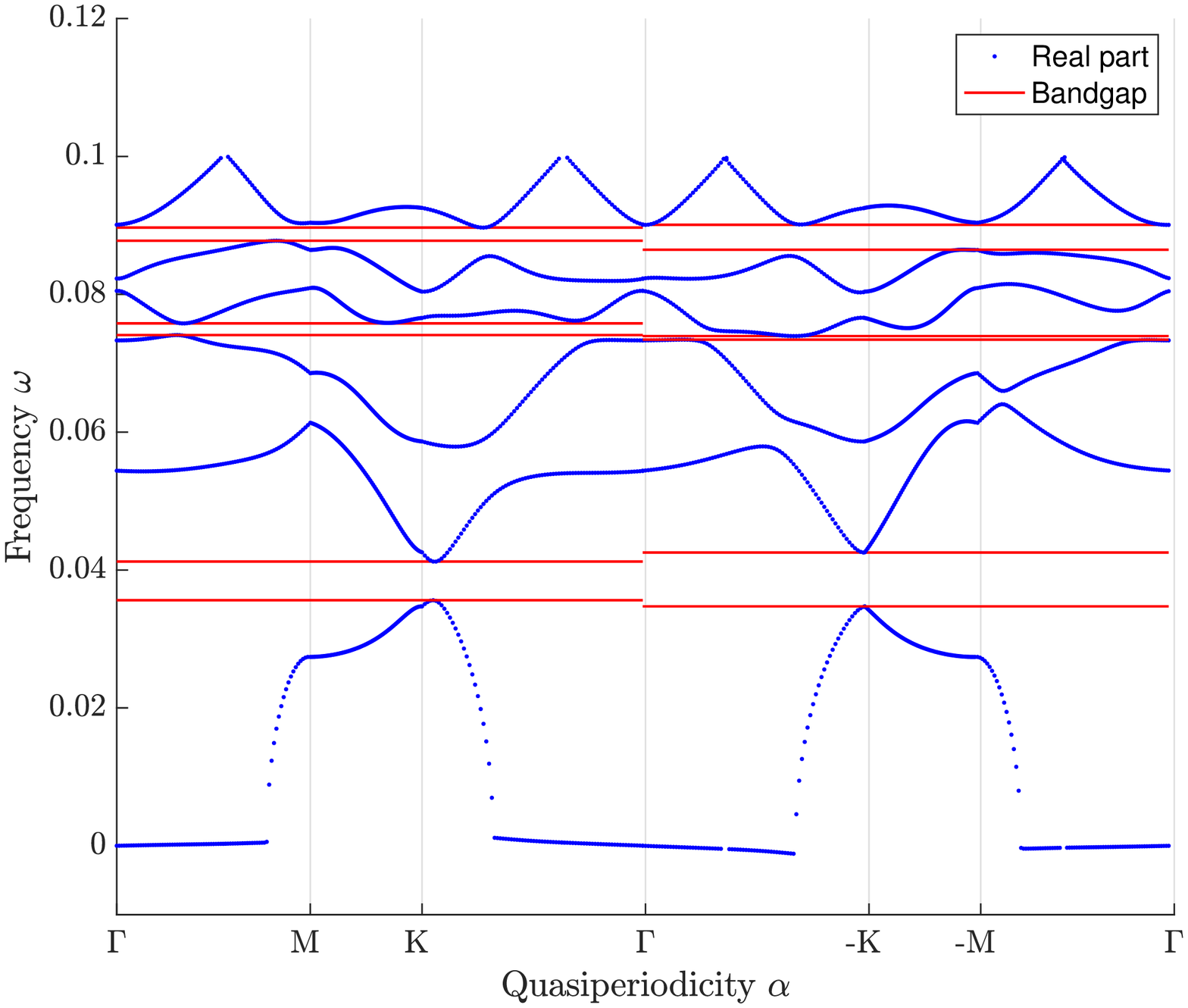}
		\end{center}
		\caption{\centering Modulated with $\varepsilon=0.5$.}
		\label{fig:HL_05}
	\end{subfigure}
	\hspace{10pt}
	\caption{Band structure of honeycomb lattice with six subwavelength resonators with modulation frequency $\Omega = 0.2$.} \label{fig7}
\end{figure}

In Figure \ref{fig7}, we compute the band structure with modulation frequency $\Omega = 0.2$. It is worth emphasizing that in this case, the numerical computation shows an even stronger occurrence of non-reciprocity compared with the chain and the square lattice. In fact, the second band gaps in the band structure of the honeycomb lattice are disjoint. Hence, there is a wave which can propagate in one path and not in the other and vice versa. This is not amounted to the first order effect discussed in the previous section. In the first order regime, the band gap opening resulted from $\varepsilon$-perturbation of $\alpha$ is either contained or contains the $-\alpha$ perturbation.	

	Furthermore, in the  unmodulated case (Figure \ref{fig:HL_0}), the band structure of the honeycomb   lattice shows a Dirac cone at the symmetry points $K$ and $-K$. By turning on the modulation (Figures \ref{fig:HL_025} and \ref{fig:HL_05}), the Dirac cones open up to local extrema of the band functions. The local extrema are called the valleys \cite{alexis}, or valley degrees of freedom.  By breaking reciprocity, we obtain different valleys for $K$ and $-K$. 

%
%
%
%
\section{Concluding remarks} \label{conclusion}

In summary, we have shown both analytically and numerically that time-modulated subwavelength resonators can lead to the emergence of unidirectional wave guiding properties associated with the presence of degenerate points in the band structure of the unmodulated periodic system by breaking time-reversal symmetry. We have also considered honeycomb lattices and illustrated a stronger occurrence of non-reciprocity compared with the cases of a chain and a square lattice.

Our results in this paper can be of immense importance for the mathematical foundation of other non-reciprocal guiding phenomena such as the valley Hall effect \cite{valley1, valley2, valley3, valley4, valley6, valley7} and the skin effect \cite{skin1, skin2, topo_skin}. The valley Hall effect may occur in truncated honeycomb lattices of time-modulated subwavelength resonators by opening non-reciprocal band gaps at Dirac points \cite{ammari2020highfrequency,ammari2020honeycomb}
while the skin effect may be obtained by opening non-reciprocal band gaps at exceptional points associated with the unmodulated structure \cite{ammari2020exceptional,ammari2020highorder,topo_exceptional}. These two challenging topics will be the subject of forthcoming publications. 

%

\appendix
\section{Eigenvalue perturbation theory and effective Hamiltonian}\label{app:pert}
Assume that $F=F_0+\varepsilon F_1+\varepsilon^2 F_2 + O(\varepsilon^3 )$ and $F_0$ is diagonal with respect to the basis vectors $w_1,\ldots,w_N$.  We would like to expand the eigenvalues of $F$ in terms of $\varepsilon$. This is a typical problem in perturbative quantum theory \cite{perturbation}. Similar formulas in quantum mechanical perturbation theory can be found in textbooks such as \cite{qmimperial}. The following derivation is reformulated to suit our setting.

We will focus on the perturbation of degenerate points. Let $f_0$ be a degenerate point of multiplicity $r$ and let  $w_1,\ldots, w_r$ be its associated eigenvectors. Without loss of generality, we assume that $(F_0)_{ii} = f_0$ for $i=1,\ldots,r$. We define the projection operator
\begin{equation*}
	P:=\begin{pmatrix}
		\text{Id}_r & \\ & 0
	\end{pmatrix}  \quad \text{and let } \ Q:=\text{Id}-P.
\end{equation*}
Here, $\text{Id}_r$ is the $r\times r$ identity matrix. 

We remark that $F_0$ commutes with $P$ and $Q$. Now, we fix an eigenvector $v_0\in\text{span}\{w_1,\ldots,w_n\}$ and expand $v$ and $f$ as follows
\begin{equation*}
	\begin{split}
		v&=v_0+\varepsilon v_1+\varepsilon^2 v_2+O(\varepsilon^3),\\
		f&=f_0+\varepsilon f_1+\varepsilon^2 f_2+O(\varepsilon^3).
	\end{split}
\end{equation*}
We require first that $v_0 = P(v)$, due to the normalization of $v$. From $Fv=fv$,  it follows that up to $O(\varepsilon^2)$ 
\begin{equation}
	\label{Qequation}
	\begin{split}
		& F_0v+\varepsilon(F_1+\varepsilon F_2)v=fv,\\
		& QF_0v+\varepsilon QVv=fQv,\\
		& Q(f \text{Id} -F_0)v=\varepsilon QVv \text{ and } \\
		& Qv =\varepsilon ((f \, \text{Id} -F_0)^{-1}Q)Vv,\end{split}	
\end{equation}
where $V:= F_1 + \varepsilon F_2$.

Note that we should treat $((f \, \text{Id} -F_0)^{-1}Q)$ as $0|_{E_{f_0}}\oplus ((f \, \text{Id} -F_0)^{-1}Q)|_{E_{f_0}^c}$, where ${E_{f_0}}$ denotes the eigenspace associated with $f_0$ and $E_{f_0}^c$ is its complementary. Similarly, we obtain that
\begin{equation*}
	PF_0v+\varepsilon PVv = fPv,
\end{equation*}
and therefore,
\begin{equation}
	\label{Pequation}
	f_0Pv+\varepsilon PVv = fPv, 
\end{equation}
where we have used that $PF_0v=F_0Pv=f_0Pv$. Now, we insert $v=Pv+Qv$ into the second term of the left-hand side of (\ref{Pequation}) and derive from $f_0Pv+\varepsilon PV(Pv+Qv)=fPv$ the following two identities:
\begin{equation}
	\label{master}
	\begin{split}
		& f_0Pv+\varepsilon PVPv+\varepsilon PVQv=fPv\text{ and }\\
		& f_0Pv+ \varepsilon PVPv+ \varepsilon^2PV\left((f \, \text{Id} -F_0)^{-1}Q\right)Vv=fPv.
	\end{split}
\end{equation}
For the $\varepsilon^2$-term, we evaluate the expression at $\varepsilon=0$:
\begin{equation}
	\label{hocus}
	PV((f\, \text{Id} -F_0)^{-1}Q)Vv|_{\varepsilon=0}=PF_1\left((f_0 \, \text{Id} -F_0)^{-1}Q\right)F_1v_0:=PF_1GF_1v_0,
\end{equation}
where $G:=(f_0 \, \text{Id} -F_0)^{-1}Q=\text{diag}(0,..,0,(f_0-\lambda_2)^{-1},\ldots,(f_0-\lambda_k)^{-1})$ if we assume that $F_0=\text{diag}(f_0,\ldots,f_0,\lambda_2,\ldots,\lambda_k)$. Hence, we can write that
\begin{equation}
	\label{effectiveH}	
	P\left(f_0 \, \text{Id} +\varepsilon (F_1+\varepsilon F_2)+\varepsilon^2(PF_1GF_1)\right)Pv_0=fv_0.
\end{equation}
With the so-called effective Hamiltonian:
\begin{equation}
	\label{effective}
	\mathcal{H}:=Pf_0P+\varepsilon PF_1P +\varepsilon^2P(F_1GF_1+F_2)P	,
\end{equation}
we can obtain $r$ perturbed eigenvalues up to order $\varepsilon^2$, if we know the form of $F_1$ and $F_2$.

\bibliographystyle{abbrv}
\bibliography{paper_reciprocal}
\end{document}